\numberwithin{equation}{section}
\theoremstyle{plain}
\newtheorem{main theorem}{Main Theorem}
\newtheorem{theorem}{Theorem}[section]
\newtheorem{lemma}[theorem]{Lemma}
\newtheorem{proposition}[theorem]{Proposition}
\theoremstyle{definition}
\newtheorem{remark}[theorem]{Remark}
\newcommand{\mdim}{\mathrm{mdim}}
\newcommand{\diam}{\mathrm{diam}}
\newcommand{\rdim}{\mathrm{rdim}}
\begin{document}

\title[Symbolic dynamics in mean dimension theory]{Symbolic dynamics in mean dimension theory}

\author{Mao Shinoda, Masaki Tsukamoto}

\subjclass[2010]{37A05, 37B10, 37C45, 94A34}

\keywords{subshift, metric mean dimension, mean Hausdorff dimension, rate distortion dimension}

\date{\today}

\thanks{M.S. was partially supported by Grant-in-Aid for JSPS Research Fellow, JSPS KAKENHI Grant Number 17J03495.
M.T. was partially supported by JSPS KAKENHI 18K03275. }

\maketitle

\begin{abstract}
Furstenberg (1967) calculated the Hausdorff and Minkowski dimensions of one-sided subshifts
in terms of topological entropy.
We generalize this to $\mathbb{Z}^2$-subshifts.
Our generalization involves mean dimension theory.
We calculate the metric mean dimension and mean Hausdorff dimension of $\mathbb{Z}^2$-subshifts 
with respect to a subaction of $\mathbb{Z}$.
The resulting formula is quite analogous to Furstenberg's theorem.
We also calculate the rate distortion dimension of $\mathbb{Z}^2$-subshifts in terms of Kolmogorov--Sinai entropy.
\end{abstract}

\section{Introduction}  \label{section: introduction}

\subsection{Hausdorff and Minkowski dimensions of subshifts}  \label{subsection: Hausdorff and Minkowski dimensions of subshifts}

Let $A$ be a finite set (alphabet).
We consider the one-sided infinite product $A^\mathbb{N} = A\times A\times A\times \cdots$
with the shift map $\sigma: A^\mathbb{N}\to A^\mathbb{N}$ defined by 
\[  \sigma\left((x_n)_{n\in \mathbb{N}}\right) = (x_{n+1})_{n\in \mathbb{N}}. \]
Take $\alpha>1$. We define a distance $d$ on $A^\mathbb{N}$ by 
\[   d(x,y) = \alpha^{-\min\{n \, |\, x_n\neq y_n\}}.    \]
Let $\mathcal{X}\subset A^\mathbb{N}$ be a $\sigma$-invariant closed subset.
Furstenberg \cite[Proposition III.1]{Furstenberg} calculated the Hausdorff and Minkowski dimensions of $\mathcal{X}$ with 
respect to $d$:
\begin{equation} \label{eq: Furstenberg theorem}
     \dim_{\mathrm{H}}(\mathcal{X},d) = \dim_{\mathrm{M}}(\mathcal{X}, d) = \frac{h_{\mathrm{top}}(\mathcal{X},\sigma)}{\log \alpha}. 
\end{equation}     
Here $h_{\mathrm{top}}(\mathcal{X}, \sigma)$ is the topological entropy of $(\mathcal{X}, \sigma)$.
The purpose of the paper is to extend this result to \textit{higher rank actions}.

\subsection{Mean dimension theory}  \label{subsection: mean dimension theory}

Mean dimension theory provides a meaningful framework for extending (\ref{eq: Furstenberg theorem}) to 
higher rank actions.
This is the theory first introduced by Gromov \cite{Gromov} and further developed 
by Lindenstrauss--Weiss \cite{Lindenstrauss--Weiss}, Lindenstrauss \cite{Lindenstrauss}, and more recently 
Lindenstrauss and the second named author \cite{Lindenstrauss--Tsukamoto double VP}.
We review the basic ingredients here. (The precise definitions will be given in \S \ref{section: preliminaries}.)

A pair $(\mathcal{X}, T)$ is called a \textbf{dynamical system} if $\mathcal{X}$ is a compact metric space 
and $T:\mathcal{X}\to \mathcal{X}$ is a homeomorphism\footnote{We can also consider a non-invertible map $T$ as in 
\S \ref{subsection: Hausdorff and Minkowski dimensions of subshifts}. But we consider only invertible $T$ here for simplicity.}.
Gromov \cite{Gromov} defined {\bf mean topological dimension} $\mdim(\mathcal{X},T)$.
This is a dynamical analogue of topological dimension, and it evaluates the number of 
parameters per iterate for describing the orbits of $(\mathcal{X}, T)$.
As the name suggested, the mean topological dimension is a \textit{topological invariant} of dynamical systems.
There are many important works around this quantity
\cite{Lindenstrauss--Weiss, Lindenstrauss, Gutman Jaworski theorem, 
Gutman--Lindenstrauss--Tsukamoto, Gutman--Tsukamoto minimal, Li--Liang, Tsukamoto Brody, Meyerovitch--Tsukamoto,
Lindenstrauss--Tsukamoto double VP}.
However mean topological dimension is \textit{not} the right notion for the purpose of this paper because 
Furstenberg's theorem (\ref{eq: Furstenberg theorem}) concerns with Hausdorff and Minkowski dimensions, not topological one.
(The topological dimension of a subshift $\mathcal{X}\subset A^\mathbb{N}$ is simply zero.)

Let $d$ be a metric (i.e. a distance function) on $\mathcal{X}$. 
Lindenstrauss--Weiss \cite{Lindenstrauss--Weiss} defined {\bf metric mean dimension} $\mdim_{\mathrm{M}}(\mathcal{X}, T, d)$.
This is a dynamical analogue of Minkowski dimension.
Lindenstrauss and the second named author \cite{Lindenstrauss--Tsukamoto double VP} 
defined {\bf mean Hausdorff dimension} $\mdim_{\mathrm{H}}(\mathcal{X},T,d)$.
This is a dynamical analogue of Hausdorff dimension.
Metric mean dimension and mean Hausdorff dimension are metric dependent quantities.
They provide a good framework for the purpose of the paper.

It is well-known in geometric measure theory \cite{Mattila} that 
metrical dimensions are deeply connected to measure theory.
In particular we can introduce the concept of (metric dependent) dimension for measure 
(see e.g. \cite{Renyi, Young, Kawabata--Dembo}).
Similarly we can introduce a mean dimensional quantity for invariant measures of dynamical systems.
Let $\mu$ be a $T$-invariant Borel probability measure on $\mathcal{X}$.
Let $X$ be a random variable taking values in $\mathcal{X}$ according to the law $\mu$, and 
we consider the stochastic process $\{T^n X\}_{n\in \mathbb{Z}}$. 
We denote by $R(d,\mu,\varepsilon)$ $(\varepsilon>0)$ the \textit{rate distortion function} of this stochastic process.
This is the key quantity of Shannon's rate distortion theory \cite{Shannon, Shannon59}.
It evaluates how many bits per iterate we need for describing the process within the distortion (with respect to $d$) bound by 
$\varepsilon$.
Following Kawabata--Dembo \cite{Kawabata--Dembo}, we define the 
{\bf upper and lower rate distortion dimensions} by\footnote{Throughout the paper, we assume that the base of the logarithm is two.}
  \begin{equation} \label{eq: rate distortion dimension}
      \overline{\rdim}(\mathcal{X}, T, d, \mu) = \limsup_{\varepsilon\to 0} \frac{R(d,\mu,\varepsilon)}{\log(1/\varepsilon)}, 
      \quad 
      \underline{\rdim}(\mathcal{X},T,d,\mu) = \liminf_{\varepsilon \to 0} \frac{R(d,\mu,\varepsilon)}{\log(1/\varepsilon)}.         
    \end{equation}
    When the upper and lower limits coincide, we denote the common value by $\rdim(\mathcal{X},T,d,\mu)$.
    
Metric mean dimension, mean Hausdorff dimension and rate distortion dimension are related to each other.
See Proposition \ref{proposition: metric mean dimension bounds mean Hausdorff dimension} and 
Theorem \ref{theorem: rate distortion dimension and mean dimensions} below.

\subsection{Statement of the main result}  \label{subsection: statement of the main result}

Let $A$ be a finite set as in \S \ref{subsection: Hausdorff and Minkowski dimensions of subshifts}.
We consider the infinite product $A^{\mathbb{Z}^2}$ index by $\mathbb{Z}^2$.
We define the shifts $\sigma_1$ and $\sigma_2$ on $A^{\mathbb{Z}^2}$ by 
\[  \sigma_1\left((x_{m,n})_{m,n\in \mathbb{Z}}\right) = (x_{m+1,n})_{m,n\in \mathbb{Z}}, \quad 
      \sigma_2\left((x_{m,n})_{m,n\in \mathbb{Z}}\right) = (x_{m,n+1})_{m,n\in \mathbb{Z}}.  \]
Fix $\alpha>1$ and define a distance $d$ on $A^{\mathbb{Z}^2}$ by 
\begin{equation} \label{eq: metric on shift}
     d(x,y) = \alpha^{-\min \{ |u|_\infty |\, x_u \neq y_u \}}, 
\end{equation}     
where $|u|_\infty = \max(|m|, |n|)$ for $u = (m,n)\in \mathbb{Z}^2$.
We call a closed subset $\mathcal{X}\subset A^{\mathbb{Z}^2}$ \textbf{subshift} if it is invariant under both $\sigma_1$ and $\sigma_2$.

The following is our main result.

\begin{theorem}  \label{theorem: higher rank analogue of Furstenberg}
Let $\mathcal{X}\subset A^{\mathbb{Z}^2}$ be a subshift. Then 
\begin{equation} \label{eq: mean Hausdorff dimension of subshifts}
     \mdim_{\mathrm{H}} (\mathcal{X}, \sigma_1, d) = \mdim_{\mathrm{M}} (\mathcal{X},\sigma_1,d) = 
     \frac{2 h_{\mathrm{top}}(\mathcal{X},\sigma_1,\sigma_2)}{\log \alpha}. 
\end{equation}     
Here $h_{\mathrm{top}}(\mathcal{X},\sigma_1,\sigma_2)$ is the topological entropy of $(\mathcal{X}, \sigma_1,\sigma_2)$.
Moreover, if $\mu$ is a Borel probability measure on $\mathcal{X}$ invariant under both $\sigma_1$ and $\sigma_2$ then 
\begin{equation*}  
  \rdim(\mathcal{X},\sigma_1,d, \mu) = \frac{2 h_\mu(\mathcal{X},\sigma_1,\sigma_2)}{\log \alpha}. 
\end{equation*}  
Here $h_\mu(\mathcal{X},\sigma_1,\sigma_2)$ is the Kolmogorov--Sinai entropy of $(\mathcal{X}, \sigma_1,\sigma_2)$
with respect to the measure $\mu$.
\end{theorem}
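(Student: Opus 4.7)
The plan is to reduce everything to counting patterns of $\mathcal{X}$ on rectangles in $\mathbb{Z}^2$, and then feed in the $\mathbb{Z}^2$-entropy of the subshift. The key geometric observation is that, writing $d_n^{\sigma_1}(x,y) := \max_{0\le k<n} d(\sigma_1^k x,\sigma_1^k y)$, one has
\[
d_n^{\sigma_1}(x,y)\;\le\;\alpha^{-N}
\quad\Longleftrightarrow\quad
x|_{R_{n,N}}=y|_{R_{n,N}},
\]
where $R_{n,N}:=[-(N-1),\,n+N-2]\times[-(N-1),\,N-1]\cap\mathbb{Z}^2$ is a rectangle of dimensions $(n+2N-1)\times(2N-1)$. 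The factor $2$ in the main formula reflects the $\sigma_2$-width $2N-1$ of $R_{n,N}$.

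First, I would establish the upper bound for the metric mean dimension. The $(\alpha^{-N})$-covering number of $(\mathcal{X},d_n^{\sigma_1})$ is at most the number of $\mathcal{X}$-patterns on $R_{n,N}$, which by the definition of $\mathbb{Z}^2$-topological entropy is at most $\exp\!\bigl((h_{\mathrm{top}}(\mathcal{X},\sigma_1,\sigma_2)+o(1))(n+2N-1)(2N-1)\bigr)$ as $\min(n,N)\to\infty$. Dividing by $n$ and letting $n\to\infty$, then dividing by $N\log\alpha$ and letting $N\to\infty$, yields
\[
\mdim_{\mathrm{M}}(\mathcal{X},\sigma_1,d)\;\le\;\frac{2\,h_{\mathrm{top}}(\mathcal{X},\sigma_1,\sigma_2)}{\log\alpha}.
\]
Combined with $\mdim_{\mathrm{H}}\le\mdim_{\mathrm{M}}$ (Proposition~\ref{proposition: metric mean dimension bounds mean Hausdorff dimension}), this handles both upper bounds in (\ref{eq: mean Hausdorff dimension of subshifts}).

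Next, I would prove the rate distortion identity by showing $R(d,\mu,\alpha^{-N})=(2N-1)\,h_\mu(\mathcal{X},\sigma_1,\sigma_2)+o(N)$ as $N\to\infty$; the formula for $\rdim$ is then immediate from the definition (\ref{eq: rate distortion dimension}) after monotone interpolation in $\varepsilon$. The upper bound is constructive: take the reconstruction $Y$ to be $X|_{R_{n,N}}$ extended to $\mathbb{Z}^2$ by any fixed deterministic rule; this achieves distortion at most $\alpha^{-N}$ and rate $\tfrac{1}{n}I(X;Y)\le\tfrac{1}{n}H_\mu(X|_{R_{n,N}})\to(2N-1)h_\mu+o(N)$. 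The lower bound uses the fact that when $d_n^{\sigma_1}(X,Y)\le\alpha^{-N}$ holds pathwise, $Y$ determines $X|_{R_{n,N}}$, so the data-processing inequality forces $I(X;Y)\ge H_\mu(X|_{R_{n,N}})\sim n(2N-1)h_\mu$.

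The remaining inequality $\mdim_{\mathrm{H}}(\mathcal{X},\sigma_1,d)\ge\frac{2h_{\mathrm{top}}}{\log\alpha}$ then follows from Theorem~\ref{theorem: rate distortion dimension and mean dimensions}, which supplies $\mdim_{\mathrm{H}}\ge\rdim(\,\cdot\,,\mu)$ for every invariant $\mu$, combined with the classical variational principle $h_{\mathrm{top}}(\mathcal{X},\sigma_1,\sigma_2)=\sup_\mu h_\mu(\mathcal{X},\sigma_1,\sigma_2)$ for $\mathbb{Z}^2$-actions. The main obstacle I anticipate is the rate distortion \emph{lower} bound: the distortion in $R(d,\mu,\varepsilon)$ is usually measured either by expected $d_n^{\sigma_1}$ or by time-averaged single-letter distortion, so the constraint need not translate directly into pathwise agreement on $R_{n,N}$. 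Converting the probabilistic constraint into the pattern-agreement statement needed for the data-processing step will require a Fano-type inequality together with a careful diagonal argument that lets $N$ grow slowly with $n$, so that boundary losses along $\partial R_{n,N}$ remain negligible.
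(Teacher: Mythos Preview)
Your upper bounds on $\mdim_{\mathrm{M}}$ and on $\rdim$ are essentially the paper's Steps 1 and 3, and your lower bound on $\rdim$ via a Fano-type inequality is exactly the idea of the paper's Step 4 (Lemma~\ref{lemma: if X and Y coincide at many sites then I(X;Y) is large}); the ``diagonal'' worry you raise is handled there by fixing $\delta$, choosing $M$ from $\varepsilon\le\delta\alpha^{-M}$, bounding the expected number of bad coordinates, and only then letting $\delta\to 0$.

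The genuine gap is in your derivation of the lower bound for $\mdim_{\mathrm{H}}$. You invoke Theorem~\ref{theorem: rate distortion dimension and mean dimensions} as giving $\mdim_{\mathrm{H}}\ge\rdim(\,\cdot\,,\mu)$ for every invariant $\mu$, but the inequality there goes the \emph{other} way: under tame growth one has $\overline{\mdim}_{\mathrm{H}}(\mathcal{X},\sigma_1,d)\le \sup_{\mu\in\mathscr{M}^{\sigma_1}(\mathcal{X})}\underline{\rdim}(\mathcal{X},\sigma_1,d,\mu)$. So Theorem~\ref{theorem: rate distortion dimension and mean dimensions} supplies an \emph{upper} bound on $\overline{\mdim}_{\mathrm{H}}$, not the lower bound on $\underline{\mdim}_{\mathrm{H}}$ you need. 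In fact, were an inequality $\underline{\mdim}_{\mathrm{H}}\ge\underline{\rdim}(\,\cdot\,,\mu)$ available for all $\sigma_1$-invariant $\mu$, combining it with Theorem~\ref{theorem: rate distortion dimension and mean dimensions} would force $\underline{\mdim}_{\mathrm{H}}\ge\overline{\mdim}_{\mathrm{H}}$ for \emph{every} system with tame growth, which is far too strong. Note also that even the correct inequality in Theorem~\ref{theorem: rate distortion dimension and mean dimensions} ranges over all $\sigma_1$-invariant measures, whereas your $\rdim$ computation only applies to $(\sigma_1,\sigma_2)$-invariant ones.

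The paper handles this step (Step 2) by a completely different, self-contained combinatorial argument: assuming $\underline{\mdim}_{\mathrm{H}}<2h_{\mathrm{top}}/\log\alpha$, one extracts from an efficient Hausdorff cover at some scale a totally ordered family of cylinder rectangles (Lemma~\ref{lemma: finding rectangles}), then iterates a Vitali-type covering lemma (Lemmas~\ref{lemma: covering lemma} and~\ref{lemma: iterated covering lemma}) to tile most of $[0,N-1]^2$ by translates of these rectangles, and finally counts patterns to contradict the definition of $h_{\mathrm{top}}$. The authors explicitly remark afterward that a measure-theoretic route (e.g.\ via Shannon--McMillan--Breiman) would be desirable but that they did not find one; your proposal runs into exactly this difficulty.
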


In particular if $\mu$ is a maximal entropy measure 
(i.e. $ h_\mu(\mathcal{X},\sigma_1,\sigma_2) =  h_{\mathrm{top}}(\mathcal{X},\sigma_1,\sigma_2)$) then 
$\rdim(\mathcal{X},\sigma_1,d,\mu)$ coincides with the mean Hausdorff dimension and metric mean dimension.

The point of the statement is that we consider various mean dimensional quantities for the action of $\sigma_1$, 
not the total $\mathbb{Z}^2$-action generated by $\sigma_1$ and $\sigma_2$.
In other words we consider only $\sigma_1$ and disregard $\sigma_2$.
Nevertheless we can recover the entropy of the total $\mathbb{Z}^2$-action.
This might look a bit strange at first sight. But in fact it has the same spirit with Furstenberg's theorem (\ref{eq: Furstenberg theorem}).
In (\ref{eq: Furstenberg theorem}), we consider the Hausdorff and Minkowski dimensions of one-sided subshifts.
Hausdorff and Minkowski dimensions are purely metric invariants and do not involve dynamics.
So here we disregard the action at all.
However we can recover the topological entropy. 
See Remark \ref{remark on main theorem} (4) below for more backgrounds behind the formulation of the theorem.

\begin{remark} \label{remark on main theorem}
\begin{enumerate}
   \item Subshifts $\mathcal{X}\subset A^{\mathbb{Z}^2}$ are totally disconnected.
So the mean topological dimension of $(\mathcal{X}, \sigma_1)$ is zero.

    \item Probably some readers notice a slight difference between Furstenberg's theorem (\ref{eq: Furstenberg theorem})
    and our (\ref{eq: mean Hausdorff dimension of subshifts}):
   Our formula involves the coefficient ``$2$'' wheres Furstenberg's theorem does not.
   This difference comes from the point that Furstenberg's theorem considers one-sided shifts (i.e. actions of $\mathbb{N}$, not $\mathbb{Z}$).
   If we consider two-sided shifts, then we get a result completely analogous to (\ref{eq: mean Hausdorff dimension of subshifts}).
    
   \item  Theorem \ref{theorem: higher rank analogue of Furstenberg} 
   can be generalized to $\mathbb{Z}^k$-shifts and, probably some noncommutative group actions. 
   But we stick to $\mathbb{Z}^2$ for simplicity of the exposition.
   
    \item A guiding principle behind our theorem is as follows: 
    Let $T:\mathbb{Z}^k\times \mathcal{X}\to \mathcal{X}$
    be a continuous action of $\mathbb{Z}^k$ on a compact metric space $\mathcal{X}$.
    If $T$ has some hyperbolicity-like property, then we can control the mean dimensional quantities of 
   the restriction of $T$ to subgroups $G \subset \mathbb{Z}^k$ with $\mathrm{rank}\, G = k-1$. 
  
    When $k=1$, the subgroup $G$ must be trivial. So, in particular, this principle claims that we can control the dimensions of 
  $\mathcal{X}$ if $\mathcal{X}$ admits an action of $\mathbb{Z}$ with some ``hyperbolicty''.
  Furstenberg's theorem (\ref{eq: Furstenberg theorem}) is a typical example of such results because 
  symbolic dynamics can be seen as an extreme case of hyperbolic dynamics.
  Theorem (\ref{theorem: higher rank analogue of Furstenberg}) corresponds to the case of $k=2$ in this principle.
  
  Another manifestation of the above principle was given by the work of \cite{Meyerovitch--Tsukamoto}.
  They proved that if $T:\mathbb{Z}^k\times \mathcal{X}\to \mathcal{X}$ is expansive then 
  the mean topological dimension of $T|_G$ is finite for any rank $(k-1)$ subgroups $G\subset \mathbb{Z}^k$.
  In particular, when $k=1$, a compact metric space has finite topological dimension if 
  it admits an expansive action of $\mathbb{Z}$.
  This is a classical theorem of Ma\~{n}\'{e} \cite{Mane}.  
  
  \item We consider the action of $\sigma_1$ in Theorem \ref{theorem: higher rank analogue of Furstenberg}.
  This corresponds to a study of the action of the subgroup $\{(n, 0)|\, n\in \mathbb{Z}\} \subset \mathbb{Z}^2$.
  According to the principle in the above (4), it is also natural to consider other rank-one subgroups.
  Namely we should study various mean dimensional quantities 
  for the action of $\sigma_1^a \sigma_2^b$ for any nonzero $(a,b)\in \mathbb{Z}^2$, which 
  corresponds to the subgroup $\{(an, bn)|\, n\in \mathbb{Z}\}$.
 
   Indeed we can calculate them.
  Take a nonzero $(a,b)\in \mathbb{Z}^2$.
   Then for a subshift $\mathcal{X}\subset A^{\mathbb{Z}^2}$ we have 
  \begin{equation} \label{eq: mean Hausdorff dimension for sigma_1^a sigma_2^b}
     \begin{split}
      \mdim_{\mathrm{M}}(\mathcal{X}, \sigma_1^a \sigma_2^b, d) &= 
        \mdim_{\mathrm{H}}(\mathcal{X}, \sigma_1^a \sigma_2^b, d) = 
         2(|a|+|b|) \frac{h_{\mathrm{top}}(\mathcal{X},\sigma_1, \sigma_2)}{\log \alpha}, \\
      \rdim(\mathcal{X}, \sigma_1^a \sigma_2^b, d, \mu) &= 2(|a|+|b|) \frac{h_\mu(\mathcal{X}, \sigma_1,\sigma_2)}{\log\alpha}.
     \end{split}       
  \end{equation}       
  Here $d$ is the metric defined by (\ref{eq: metric on shift}) and 
  $\mu$ is a Borel probability measure on $\mathcal{X}$ invariant under $\sigma_1$ and $\sigma_2$.

  The factor $2(|a|+|b|)$ in (\ref{eq: mean Hausdorff dimension for sigma_1^a sigma_2^b}) has the following geometric meaning.  
  For natural numbers $M$ and $N$, we define $\Lambda_{a,b}(M,N) \subset \mathbb{Z}^2$ as the set of points
  \[  (an + x, bn +y), \quad (0\leq n<N, \>  |(x,y)|_\infty <M). \]  
  Here $n, x, y$ are integers. 
  (Namely, we consider the parallel translations of $(-M,M)^2$
  along the segment $\{(an, bn)|\, 0\leq n < N\}$. )
  Then we have
  \[ 2(|a|+|b|) =  \lim_{M\to \infty} \left(\lim_{N\to \infty} \frac{|\Lambda_{a,b}(M,N)|}{MN}\right) \quad 
      (\text{$|\cdot|$ denotes the cardinality}).    \]

The square $(-M, M)^2$ is the disk of radius $M$ in the $\ell^\infty$-norm $|u|_\infty$. 
The relevance of the $\ell^\infty$-norm here comes from the point that the metric (\ref{eq: metric on shift})
uses it. If we use a different metric, then we get a different result.
For example, consider the following metric $\rho$ on $A^{\mathbb{Z}^2}$:
 \begin{equation}  \label{eq: ell^2 metric on shift}
     \rho(x,y) = \alpha^{-\min\{ \sqrt{m^2+n^2} |\, x_{m,n} \neq y_{m,n}\}}. 
 \end{equation}    
 This metric uses the $\ell^2$-norm $\sqrt{m^2+n^2}$ instead of the $\ell^\infty$-norm.
 For this metric we have 
   \begin{equation} \label{eq: mean Hausdorff dimension for sigma_1^a sigma_2^b for ell^2 metric}
     \begin{split}
      \mdim_{\mathrm{M}}(\mathcal{X}, \sigma_1^a \sigma_2^b, \rho) &= 
        \mdim_{\mathrm{H}}(\mathcal{X}, \sigma_1^a \sigma_2^b, \rho) = 
         2\sqrt{a^2+b^2} \cdot \frac{h_{\mathrm{top}}(\mathcal{X},\sigma_1, \sigma_2)}{\log \alpha}, \\
      \rdim(\mathcal{X}, \sigma_1^a \sigma_2^b,\rho, \mu) &= 2\sqrt{a^2+b^2}\cdot  
      \frac{h_\mu(\mathcal{X}, \sigma_1,\sigma_2)}{\log\alpha}.
     \end{split}       
  \end{equation}       
  The proofs of (\ref{eq: mean Hausdorff dimension for sigma_1^a sigma_2^b}) and 
  (\ref{eq: mean Hausdorff dimension for sigma_1^a sigma_2^b for ell^2 metric}) are conceptually the same with 
 the proof of Theorem \ref{theorem: higher rank analogue of Furstenberg}.
 However they become notationally more messy.
 So we decide to concentrate on the statement of Theorem \ref{theorem: higher rank analogue of Furstenberg}.
 It clarifies the ideas in the simplest form.
\end{enumerate}    
\end{remark}

\subsection*{Acknowledgment}
The first proof we gave to Theorem \ref{theorem: higher rank analogue of Furstenberg}
contained a gap. Elon Lindenstrauss pointed out this, and he also kindly explained to us how to fix the gap.
We would like to thank him for the help.

\section{Preliminaries}  \label{section: preliminaries}

The purpose of this section is to define the three dynamical dimensions 
(metric mean dimension, mean Hausdorff dimension and rate distortion dimension)\footnote{We do not use mean topological dimension 
in the paper. So we skip to define it.}
and explain some of their basic properties.

\subsection{Metric mean dimension and mean Hausdorff dimension}  
\label{subsection: metric mean dimension and mean Hausdorff dimension}

Let $(\mathcal{X},d)$ be a compact metric space.
For $\varepsilon>0$
we define $\#(\mathcal{X}, d, \varepsilon)$ as the minimum natural number $n$ such that 
$\mathcal{X}$ can be covered by open sets $U_1, \dots, U_n$ with $\diam \, U_i < \varepsilon$ for all $1\leq i\leq n$.
The upper and lower Minkowski dimensions of $(\mathcal{X},d)$ are given by
\[  \overline{\dim}_{\mathrm{M}}(\mathcal{X},d) = \limsup_{\varepsilon \to 0} \frac{\log\#(\mathcal{X},d,\varepsilon)}{\log(1/\varepsilon)},
  \quad
   \underline{\dim}_{\mathrm{M}}(\mathcal{X},d) = \liminf_{\varepsilon\to 0} \frac{\log\#(\mathcal{X},d,\varepsilon)}{\log(1/\varepsilon)}. \] 
For $s\geq 0$ and $\varepsilon >0$ we define 
\[ \mathcal{H}_\varepsilon^s(\mathcal{X},d) = \inf\left\{ \sum_{i=1}^\infty (\diam \, E_i)^s \middle| \,
    \mathcal{X} = \bigcup_{n=1}^\infty E_i \text{ with $\diam\, E_i < \varepsilon$ for all $i\geq 1$}\right\}. \]
Here we use the convention that $0^0 = 1$ and $\diam(\emptyset)^s = 0$.    
Since $\mathcal{X}$ is compact, this is equal to the infimum of 
\[ \sum_{i=1}^n (\diam\, U_i)^s \]
over all finite open covers $\{U_1,\dots, U_n\}$ of $\mathcal{X}$ with $\diam\, U_i < \varepsilon$ for all $1\leq i\leq n$.
We set 
\[  \dim_{\mathrm{H}}(\mathcal{X},d,\varepsilon) = \sup\{s\geq 0|\, \mathcal{H}^s_\varepsilon(\mathcal{X},d) \geq 1\}. \]
The Hausdorff dimension of $(\mathcal{X},d)$ is given by 
\[ \dim_{\mathrm{H}}(\mathcal{X},d) = \lim_{\varepsilon \to 0} \dim_{\mathrm{H}}(\mathcal{X},d,\varepsilon).  \]

Given a homeomorphism $T:\mathcal{X}\to \mathcal{X}$, we define metrics $d^T_N$ $(N\geq 1)$ on $\mathcal{X}$ by 
\[  d^T_N(x,y) = \max_{0\leq n < N} d(T^n x, T^n y). \]
We define the \textbf{entropy at the resolution $\varepsilon >0$} by 
\[ S(\mathcal{X}, T, d, \varepsilon) = \lim_{N\to \infty} \frac{\log\#(\mathcal{X},d^T_N,\varepsilon)}{N}. \] 
This limit exists because $\log\#(\mathcal{X},d^T_N,\varepsilon)$ is suadditive in $N$.
We define the \textbf{upper and lower metric mean dimensions} by 
\[  \overline{\mdim}_{\mathrm{M}}(\mathcal{X},T,d) = 
    \limsup_{\varepsilon\to 0} \frac{S(\mathcal{X},T,d,\varepsilon)}{\log(1/\varepsilon)}, \quad 
   \underline{\mdim}_{\mathrm{M}}(\mathcal{X},T,d) = 
    \liminf_{\varepsilon\to 0} \frac{S(\mathcal{X},T,d,\varepsilon)}{\log(1/\varepsilon)}.  \]
When the upper and lower limits coincide, we denote the common value by $\mdim_{\mathrm{M}}(\mathcal{X}, T,d)$.

We define the \textbf{upper and lower mean Hausdorff dimensions} by 
\begin{equation*}
  \begin{split}
  \overline{\mdim}_{\mathrm{H}}(\mathcal{X},T,d) &= \lim_{\varepsilon\to 0} 
    \left(\limsup_{N\to \infty} \frac{\dim_{\mathrm{H}}(\mathcal{X},d_N,\varepsilon)}{N}\right), \\
    \underline{\mdim}_{\mathrm{H}}(\mathcal{X},T,d)& = \lim_{\varepsilon\to 0} 
    \left(\liminf_{N\to \infty} \frac{\dim_{\mathrm{H}}(\mathcal{X},d_N,\varepsilon)}{N}\right). 
  \end{split}  
\end{equation*}    
When these two quantities are equal to each other, we denote the common value by 
$\mdim_{\mathrm{H}}(\mathcal{X},T,d)$.

The following is the dynamical analogue of the fact that Minkowski dimension bounds Hausdorff dimension.
It was proved in \cite[Proposition 3.2]{Lindenstrauss--Tsukamoto double VP}.

\begin{proposition}  \label{proposition: metric mean dimension bounds mean Hausdorff dimension}
\[   \underline{\mdim}_{\mathrm{H}}(\mathcal{X},T,d) \leq   \overline{\mdim}_{\mathrm{H}}(\mathcal{X},T,d)
      \leq    \underline{\mdim}_{\mathrm{M}}(\mathcal{X},T,d)  \leq  \overline{\mdim}_{\mathrm{M}}(\mathcal{X},T,d). \]
\end{proposition}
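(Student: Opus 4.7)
The plan is to treat the outer two inequalities as essentially tautological and to concentrate on the middle one, $\overline{\mdim}_{\mathrm{H}}(\mathcal{X},T,d) \leq \underline{\mdim}_{\mathrm{M}}(\mathcal{X},T,d)$. The first inequality $\underline{\mdim}_{\mathrm{H}} \leq \overline{\mdim}_{\mathrm{H}}$ and the last inequality $\underline{\mdim}_{\mathrm{M}} \leq \overline{\mdim}_{\mathrm{M}}$ follow at once from $\liminf \leq \limsup$. All the content is in the dynamical analogue of the classical comparison $\dim_{\mathrm{H}} \leq \underline{\dim}_{\mathrm{M}}$, applied at each Bowen scale.

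The key ingredient I would isolate first is the following scale-by-scale statement, proved purely in compact metric space terms: for any compact metric $(Y,\rho)$, any $\varepsilon > 0$, and any $0 < \varepsilon' < \varepsilon$, one has
\[
   \dim_{\mathrm{H}}(Y,\rho,\varepsilon) \;\leq\; \frac{\log \#(Y,\rho,\varepsilon')}{\log(1/\varepsilon')}.
\]
This is immediate: a minimizing cover of $Y$ at resolution $\varepsilon'$ is admissible for the infimum defining $\mathcal{H}^s_\varepsilon$, and yields the bound $\mathcal{H}^s_\varepsilon(Y,\rho) \leq \#(Y,\rho,\varepsilon') \cdot (\varepsilon')^s$, which is strictly less than $1$ as soon as $s$ exceeds $\log\#(Y,\rho,\varepsilon')/\log(1/\varepsilon')$.

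Next I would apply this with $(Y,\rho) = (\mathcal{X}, d_N^T)$, noting that $\#$ depends on the metric but the bound is uniform in $N$. Dividing by $N$ and taking $\limsup_{N\to\infty}$ gives, for every $0<\varepsilon'<\varepsilon$,
\[
   \limsup_{N\to\infty} \frac{\dim_{\mathrm{H}}(\mathcal{X},d_N^T,\varepsilon)}{N}
   \;\leq\; \frac{S(\mathcal{X},T,d,\varepsilon')}{\log(1/\varepsilon')}.
\]
The left-hand side does not depend on $\varepsilon'$, so I can take $\liminf_{\varepsilon'\to 0}$ on the right to obtain $\underline{\mdim}_{\mathrm{M}}(\mathcal{X},T,d)$. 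Finally, taking $\varepsilon \to 0$ (along which the left-hand side is monotone, since refining the diameter constraint only increases the Hausdorff dimension at resolution $\varepsilon$) produces $\overline{\mdim}_{\mathrm{H}}(\mathcal{X},T,d)$ on the left, and the desired inequality follows.

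There is no real obstacle here; the only point deserving attention is the ordering of the three limits in $\varepsilon$, $\varepsilon'$, and $N$. The safest route is to fix $\varepsilon$, let $N \to \infty$ first, then let $\varepsilon' \to 0$ inside the window $\varepsilon' < \varepsilon$, and only afterwards let $\varepsilon \to 0$; this is what the monotonicity in $\varepsilon$ of $\dim_{\mathrm{H}}(\mathcal{X},d_N^T,\varepsilon)$ permits and is what converts a $\limsup$ in $\varepsilon'$ on the metric mean dimension side into the desired $\liminf$.
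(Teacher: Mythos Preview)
Your argument is correct. The paper does not actually supply a proof of this proposition; it simply cites \cite[Proposition 3.2]{Lindenstrauss--Tsukamoto double VP}. What you have written is precisely the standard argument one would expect there: the scale-by-scale bound $\dim_{\mathrm{H}}(Y,\rho,\varepsilon) \leq \log\#(Y,\rho,\varepsilon')/\log(1/\varepsilon')$ for $\varepsilon'<\varepsilon$, applied to the Bowen metrics $d_N^T$, followed by the limits in the order $N\to\infty$, then $\varepsilon'\to 0$, then $\varepsilon\to 0$. The ordering you flag is exactly the point, and your handling of it is clean; the independence of the left-hand side from $\varepsilon'$ is what allows you to replace the right-hand side by its $\liminf$ in $\varepsilon'$ and thereby reach $\underline{\mdim}_{\mathrm{M}}$ rather than merely $\overline{\mdim}_{\mathrm{M}}$.
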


\begin{remark}
Here is one remark about the notation.
In the paper \cite{Lindenstrauss--Tsukamoto double VP}, the lower mean Hausdorff dimension played no role.
So the upper mean Hausdorff dimension was simply denoted by $\mdim_{\mathrm{H}}(\mathcal{X},T,d)$
in \cite{Lindenstrauss--Tsukamoto double VP}.   
\end{remark}

\subsection{Mutual information} \label{subsection: mutual information}

Let $(\Omega ,\mathbb{P})$ be a probability space.
Let $\mathcal{X}$ and $\mathcal{Y}$ be measurable spaces, and let 
$X:\Omega\to \mathcal{X}$ and $Y:\Omega\to \mathcal{Y}$ be measurable maps.
We want to define their \textbf{mutual information} $I(X;Y)$ as the measure of the amount of information 
$X$ and $Y$ share. (This will be used in the definition of rate distortion function in the next subsection.)
The basic reference is \cite{Cover--Thomas}.

\textbf{Case 1: When $\mathcal{X}$ and $\mathcal{Y}$ are finite sets.}
In this case\footnote{We always assume that the $\sigma$-algebras of finite sets are the largest ones, i.e. the sets of 
all subsets.} we set 
\begin{equation} \label{eq: mutual information}
  \begin{split}
    I(X;Y)  &= H(X)+H(Y)-H(X,Y) = H(X)- H(Y|X) \\
             &= \sum_{x\in \mathcal{X}, y\in \mathcal{Y}} \mathbb{P}(X=x,Y=y) 
           \log \frac{\mathbb{P}(X=x, Y=y)}{\mathbb{P}(X=x) \mathbb{P}(Y=y)}.
   \end{split} 
\end{equation}    
Here we have used the convention that $0\log(0/a)= 0$ for all $a\geq 0$.

\textbf{Case 2: General case.}
Let $f:\mathcal{X}\to A$ and $g:\mathcal{Y}\to B$ be measurable maps such that $A$ and $B$ are finite sets.
Then we can define $I(f\circ X; g\circ Y)$ by (\ref{eq: mutual information}).
We define $I(X;Y)$ as the supremum of $I(f\circ X; g\circ Y)$ over all finite range measurable maps $f$ on $\mathcal{X}$ and
$g$ on $\mathcal{Y}$.
When $\mathcal{X}$ and $\mathcal{Y}$ are finite sets, this definition is compatible with (\ref{eq: mutual information}).
(Namely the supremum is attained when $f$ and $g$ are the identity maps.)

The mutual information is symmetric and nonnegative: $I(X;Y) = I(Y;X) \geq 0$.
The following basic result immediately follows from the above definition.

\begin{lemma}[Data-Processing inequality]  \label{lemma: data-processing inequality}
Let $\mathcal{Z}$ and $\mathcal{W}$ be measurable spaces.
If $f:\mathcal{X}\to \mathcal{Z}$ and $g:\mathcal{Y}\to \mathcal{W}$ be measurable maps, then $I(f(X); g(Y)) \leq I(X;Y)$.
\end{lemma}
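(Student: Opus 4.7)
The plan is to reduce directly to the defining supremum in Case 2. Since $I(X;Y)$ in general is defined as the supremum of $I(\phi\circ X;\psi\circ Y)$ over all finite-range measurable maps $\phi$ on $\mathcal{X}$ and $\psi$ on $\mathcal{Y}$, and similarly $I(f(X);g(Y))$ is the supremum of $I(\phi\circ f\circ X;\psi\circ g\circ Y)$ over all finite-range measurable maps $\phi$ on $\mathcal{Z}$ and $\psi$ on $\mathcal{W}$, the argument will amount to observing that the second supremum is taken over a subfamily of the first under composition with $f$ and $g$.

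Concretely, I would first fix arbitrary finite-range measurable maps $\phi:\mathcal{Z}\to A$ and $\psi:\mathcal{W}\to B$ with $A,B$ finite, and identify the quantity $I(\phi\circ f\circ X;\psi\circ g\circ Y)$ as a typical term in the supremum defining $I(f(X);g(Y))$. Next, since $f$ and $g$ are assumed measurable, the compositions $\phi\circ f:\mathcal{X}\to A$ and $\psi\circ g:\mathcal{Y}\to B$ are themselves finite-range measurable maps on $\mathcal{X}$ and $\mathcal{Y}$; hence the very same value $I(\phi\circ f\circ X;\psi\circ g\circ Y)$ appears as a term in the supremum defining $I(X;Y)$ and is therefore bounded above by $I(X;Y)$. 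Taking the supremum over $\phi$ and $\psi$ then yields $I(f(X);g(Y))\leq I(X;Y)$.

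There is no real obstacle here: the generality of Case 2 was set up precisely so that this monotonicity under measurable maps is immediate, with all the substantive content of data-processing being packaged into the Case 1 definition via $H(X)+H(Y)-H(X,Y)$. The only points worth checking carefully are the measurability of $\phi\circ f$ and $\psi\circ g$, which is routine, and the compatibility of the Case 1 and Case 2 definitions when $\mathcal{X}$ and $\mathcal{Y}$ are already finite, which the authors have already noted.
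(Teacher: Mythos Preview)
Your proof is correct and matches the paper's approach exactly: the paper simply states that the result ``immediately follows from the above definition,'' and what you have written is precisely the one-line unpacking of that remark, namely that the supremum defining $I(f(X);g(Y))$ ranges over a subfamily of the supremum defining $I(X;Y)$.
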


\subsection{Rate distortion theory}  \label{subsection: rate distortion theory}

Here we introduce rate distortion function.
As Shannon entropy is the fundamental limit of \textit{lossless} data compression, 
rate distortion function is the fundamental limit of \textit{lossy} data compression\footnote{For example, expanding a given signal 
in a wavelet basis and discarding a small terms.}.
A friendly introduction can be found in \cite[Chapter 10]{Cover--Thomas}.

Let $(\mathcal{X},T)$ be a dynamical system with a metric $d$ and an invariant Borel probability measure $\mu$.
We define the \textbf{rate distortion function} $R(d,\mu,\varepsilon)$ $(\varepsilon>0)$ as the infimum of 
\[ \frac{I(X;Y)}{N}, \]
where $N$ runs over natural numbers, $X$ and $Y=(Y_0,\dots, Y_{N-1})$ are random variables defined on some probability space
$(\Omega, \mathbb{P})$ such that 
\begin{itemize}
   \item $X$ takes values in $\mathcal{X}$ according to the law $\mu$. 
   \item $Y_0, \dots, Y_{N-1}$ take values in $\mathcal{X}$ and satisfy 
   \begin{equation} \label{eq: distortion condition}
        \mathbb{E}\left(\frac{1}{N}\sum_{n=0}^{N-1} d(T^n X, Y_n)\right) < \varepsilon. 
   \end{equation}     
\end{itemize}

The condition (\ref{eq: distortion condition}) means that $Y=(Y_0, \dots, Y_{N-1})$ approximates the stochastic process $X, TX, \dots, T^{N-1}X$
within the averaged distortion bound by $\varepsilon$.
We define the upper and lower rate distortion dimensions $\overline{\rdim}(\mathcal{X},T,d, \mu)$ and 
$\underline{\rdim}(\mathcal{X},T,d,\mu)$ by (\ref{eq: rate distortion dimension})
in \S \ref{subsection: mean dimension theory}.

The rate distortion function $R(d,\mu,\varepsilon)$ is the minimum rate when we try to quantize the process
$\{T^n X\}_{n\in \mathbb{Z}}$ within the averaged distortion bound by $\varepsilon$.
See \cite[Chapter 10]{Cover--Thomas}, \cite[Chapter 11]{Gray} and \cite{ECG, LDN} for the precise meaning of this statement.

The rest of this subsection is not used in the proof of Theorem \ref{theorem: higher rank analogue of Furstenberg}.
We include this for providing readers a wider view of the subject.
A metric $d$ is said to have the \textbf{tame growth of covering numbers} if for any $\delta>0$
\begin{equation} \label{eq: tame growth of covering numbers}
   \lim_{\varepsilon\to 0} \varepsilon^\delta \log \#(\mathcal{X}, d, \varepsilon) =0. 
\end{equation}    
Note that this is purely a condition on the metric structure and does not involve dynamics.
(\ref{eq: tame growth of covering numbers}) is a mild condition. 
It is known (\cite[Lemma 3.10]{Lindenstrauss--Tsukamoto double VP})
that every compact metrizable space admits a metric satisfying (\ref{eq: tame growth of covering numbers}).
For example, the metrics (\ref{eq: metric on shift}) and (\ref{eq: ell^2 metric on shift}) on the shift space $A^{\mathbb{Z}^2}$ satisfy 
(\ref{eq: tame growth of covering numbers}).
The following theorem \cite[Proposition 3.2, Theorem 3.11]{Lindenstrauss--Tsukamoto double VP} 
provides a link between rate distortion dimension and various mean dimensions.
Here we denote by $\mathscr{M}^T(\mathcal{X})$ the set of all invariant Borel probability measures on $\mathcal{X}$.

\begin{theorem} \label{theorem: rate distortion dimension and mean dimensions}
\[  \overline{\rdim}(\mathcal{X},T,d,\mu) \leq \overline{\mdim}_{\mathrm{M}}(\mathcal{X},T,d), \quad 
    \underline{\rdim}(\mathcal{X},T,d,\mu) \leq \underline{\mdim}_{\mathrm{M}}(\mathcal{X},T,d). \]
If $d$ has the tame growth of covering numbers then 
\[  \overline{\mdim}_{\mathrm{H}}(\mathcal{X},T,d) \leq \sup_{\mu\in \mathscr{M}^T(\mathcal{X})} \underline{\rdim}(\mathcal{X},T,d,\mu). \]
\end{theorem}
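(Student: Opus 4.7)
The theorem splits into two rather different arguments. The upper-bound pair $\overline{\rdim}\le\overline{\mdim}_{\mathrm M}$ and $\underline{\rdim}\le\underline{\mdim}_{\mathrm M}$ follows from a single direct construction that quantizes orbits against a minimal $d^T_N$-cover, whereas the variational lower bound $\overline{\mdim}_{\mathrm H}\le\sup_{\mu}\underline{\rdim}$ is much more delicate and relies on Frostman's lemma followed by a $T$-invariant averaging. The tame-growth hypothesis (\ref{eq: tame growth of covering numbers}) enters only in the second argument, to control error terms as the scale shrinks.

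For the easy direction, fix $\varepsilon>0$ and $N\ge 1$ and set $M=\#(\mathcal X,d^T_N,\varepsilon)$. Cover $\mathcal X$ by sets $U_1,\dots,U_M$ each of $d^T_N$-diameter less than $\varepsilon$, pick representatives $y_j\in U_j$, and let $j(X)$ be the measurable index of a set containing $X$. The vector $Y=(y_{j(X)},T y_{j(X)},\dots,T^{N-1} y_{j(X)})$ is a deterministic function of $j(X)$, so Lemma \ref{lemma: data-processing inequality} yields $I(X;Y)\le H(j(X))\le \log M$, while the distortion condition (\ref{eq: distortion condition}) is automatically satisfied at level $\varepsilon$. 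Hence $R(d,\mu,\varepsilon)\le (\log M)/N$, and sending $N\to\infty$ gives $R(d,\mu,\varepsilon)\le S(\mathcal X,T,d,\varepsilon)$. Dividing by $\log(1/\varepsilon)$ and taking $\limsup$ (respectively $\liminf$) as $\varepsilon\to 0$ proves both of the first two inequalities at once.

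For the variational bound, fix $s<\overline{\mdim}_{\mathrm H}(\mathcal X,T,d)$. The definition of mean Hausdorff dimension supplies $\varepsilon_0>0$ such that for every $0<\varepsilon<\varepsilon_0$ there are infinitely many $N$ with $\dim_{\mathrm H}(\mathcal X,d^T_N,\varepsilon)>sN$, hence $\mathcal H_\varepsilon^{sN}(\mathcal X,d^T_N)\ge 1$. A scale-$\varepsilon$ version of Frostman's lemma in the metric $d^T_N$ then produces a Borel probability measure $\nu_{N,\varepsilon}$ on $\mathcal X$ and a constant $c_{N,\varepsilon}>0$ with
\[ \nu_{N,\varepsilon}\bigl(B_{d^T_N}(x,r)\bigr)\le c_{N,\varepsilon}\,r^{sN}\qquad(x\in\mathcal X,\;0<r<\varepsilon). \]
I would then form the $T$-averaged measure $\mu_{N,\varepsilon}=\frac{1}{N}\sum_{n=0}^{N-1}T^n_{*}\nu_{N,\varepsilon}$ and, along a carefully chosen diagonal sequence $\varepsilon_k\to 0$, $N_k\to\infty$, extract a weak-$*$ accumulation point $\mu\in\mathscr M^T(\mathcal X)$, which is $T$-invariant by the usual Krylov--Bogolyubov argument.

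The heart of the proof is to propagate the Frostman mass-decay into a rate-distortion lower bound for the limiting $\mu$. For an admissible pair $(X,Y)$ with distortion less than $\varepsilon'$, the Frostman bound forces the conditional distribution of $X$ given $Y$ to be essentially concentrated on a set of $\nu_{N,\varepsilon}$-mass at most $c_{N,\varepsilon}(\varepsilon')^{sN}$, which via standard mutual-information manipulations produces a per-symbol inequality of the form $R(d,\mu,\varepsilon')\ge s\log(1/\varepsilon')-E(\varepsilon',\varepsilon,N)$ with an explicit error $E$ involving $\log c_{N,\varepsilon}$ and $\log\#(\mathcal X,d,\varepsilon')$. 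The main obstacle is this error term: it must be made $o(\log(1/\varepsilon'))$ after sending $N\to\infty$ along the diagonal, and this is precisely what the tame-growth hypothesis accomplishes by forcing the auxiliary metric-entropy contributions to be subexponential at every polynomial rate. A closely related subtlety, and the reason for the diagonal extraction rather than a fixed-$\varepsilon$ construction, is that a \emph{single} invariant $\mu$ must satisfy the lower bound along an entire sequence $\varepsilon'_k\to 0$, not one $\mu$ per scale.
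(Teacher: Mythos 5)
The paper does not prove this theorem; it quotes it from \cite[Proposition 3.2, Theorem 3.11]{Lindenstrauss--Tsukamoto double VP} as background explicitly ``not used in the proof'' of the main result, so there is no in-paper argument to compare against. On its own terms, your proof of the first two inequalities is correct and essentially standard: the quantized $Y=(T^n y_{j(X)})_{0\le n<N}$ built from a minimal $d^T_N$-cover satisfies (\ref{eq: distortion condition}) at level $\varepsilon$ and has $I(X;Y)\le H(j(X))\le\log\#(\mathcal X,d^T_N,\varepsilon)$, giving $R(d,\mu,\varepsilon)\le S(\mathcal X,T,d,\varepsilon)$ and both inequalities after dividing by $\log(1/\varepsilon)$.

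The sketch of the variational bound, however, has a genuine gap at exactly the point you label its ``heart.'' The Frostman measure $\nu_{N,\varepsilon}$ produced from $\mathcal H^{sN}_\varepsilon(\mathcal X,d^T_N)\ge 1$ is not $T$-invariant, and neither the average $\mu_{N,\varepsilon}=\frac1N\sum_n T^n_*\nu_{N,\varepsilon}$ nor its weak-$*$ limit $\mu\in\mathscr M^T(\mathcal X)$ inherits the decay $\nu_{N,\varepsilon}\bigl(B_{d^T_N}(x,r)\bigr)\le c_{N,\varepsilon}r^{sN}$. Your key assertion that the Frostman bound ``forces the conditional distribution of $X$ given $Y$ to be concentrated on a set of small $\nu_{N,\varepsilon}$-mass'' conflates the law of $X$ (which in $R(d,\mu,\varepsilon')$ must be the invariant $\mu$) with $\nu_{N,\varepsilon}$ (which is the object that actually satisfies the mass-decay), and ``standard mutual-information manipulations'' is not a proof of the claimed per-symbol inequality $R(d,\mu,\varepsilon')\ge s\log(1/\varepsilon')-E$. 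Moreover, even if one established a lower bound at the non-invariant $\mu_{N_k,\varepsilon_k}$, there is no a priori reason it passes to the weak-$*$ accumulation point $\mu$, since $R(d,\cdot,\varepsilon')$ is not evidently lower semicontinuous in the measure. Bridging exactly these obstacles is the technical content of the cited \cite[Theorem 3.11]{Lindenstrauss--Tsukamoto double VP}; the present sketch names them rather than resolving them.
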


\section{Proof of Theorem \ref{theorem: higher rank analogue of Furstenberg}}

First we recall the notations of \S \ref{subsection: statement of the main result}.
 $A^{\mathbb{Z}^2}$ is the $\mathbb{Z}^2$-full shift on the alphabet (finite set) $A$ with the shifts $\sigma_1$ and $\sigma_2$.
Fix $\alpha>1$ and we define the metric $d$ on $A^{\mathbb{Z}^2}$ by 
\[ d(x,y) = \alpha^{-\min \{ |u|_\infty |\, x_u \neq y_u \}}. \]
Let $\mathcal{X}\subset A^{\mathbb{Z}^2}$ be a subshift (closed shift-invariant set) with a Borel probability measure $\mu$ invariant under 
both $\sigma_1$ and $\sigma_2$.

The proof of Theorem \ref{theorem: higher rank analogue of Furstenberg} is divided into 4 steps:
\begin{enumerate}
  \item Prove the upper bound on the upper metric mean dimension
  \[ \overline{\mdim}_{\mathrm{M}}(\mathcal{X}, \sigma_1, d) \leq \frac{2h_{\mathrm{top}}(\mathcal{X}, \sigma_1,\sigma_2)}{\log \alpha}. \]
  \item Prove the lower bound on the lower mean Hausdorff dimension
  \[ \underline{\mdim}_{\mathrm{H}}(\mathcal{X}, \sigma_1, d) \geq \frac{2h_{\mathrm{top}}(\mathcal{X}, \sigma_1,\sigma_2)}{\log \alpha}. \] 
  \item Prove the upper bound on the upper rate distortion dimension
  \[ \overline{\rdim}(\mathcal{X}, \sigma_1,d, \mu) \leq \frac{2h_\mu(\mathcal{X},\sigma_1,\sigma_2)}{\log \alpha}. \]
  \item Prove the lower bound on the lower rate distortion dimension
  \[  \underline{\rdim}(\mathcal{X},\sigma_1,d,\mu) \geq \frac{2 h_{\mu}(\mathcal{X},\sigma_1,\sigma_2)}{\log\alpha}. \]
\end{enumerate}

Since we know $\underline{\mdim}_{\mathrm{H}}(\mathcal{X},\sigma_1,d) \leq \overline{\mdim}_{\mathrm{M}}(\mathcal{X},\sigma_1,d)$
by Proposition \ref{proposition: metric mean dimension bounds mean Hausdorff dimension},
the steps (1) and (2) show 
\[  \mdim_{\mathrm{H}}(\mathcal{X},\sigma_1,d)  = \mdim_{\mathrm{M}}(\mathcal{X}, \sigma_1, d) 
  = \frac{2h_{\mathrm{top}}(\mathcal{X},\sigma_1,\sigma_2)}{\log\alpha}. \]
The steps (3) and (4) show 
\[  \rdim(\mathcal{X},\sigma_1, d,\mu) = \frac{2h_\mu(\mathcal{X},\sigma_1,\sigma_2)}{\log\alpha}. \]
The steps (1) and (3) are easy. 
The step (2) is the most involved.
The four steps are independent of each other.

For $\Omega\subset \mathbb{Z}^2$ we denote by $\pi_\Omega: \mathcal{X}\to A^{\Omega}$ the natural 
projection. 
As in \S \ref{subsection: metric mean dimension and mean Hausdorff dimension}
we set $d_N^{\sigma_1}(x,y) = \max_{0\leq n <N} d(\sigma_1^n x, \sigma_1^n y)$ for $N>0$.
In this section, intervals mean \textit{discrete} intervals.
Namely, for example,  $[a,b] = \{a, a+1, \dots,b-1, b\}$ and $(a,b) = \{a+1,a+2,\dots, b-1\}$ for integers $a\leq b$.

\subsection{Step 1: Proof of 
$\overline{\mdim}_{\mathrm{M}}(\mathcal{X}, \sigma_1, d) \leq 2 h_{\mathrm{top}}(\mathcal{X},\sigma_1,\sigma_2)/\log\alpha$.}

Let $0<\varepsilon<1$ and take a natural number $M$ with $\alpha^{-M} < \varepsilon \leq \alpha^{-M+1}$.
Then
\[  \#(\mathcal{X},d_N^{\sigma_1},\varepsilon) \leq |\pi_{(-M,N+M) \times (-M,M)}(\mathcal{X})|. \]
(Here $|\cdot|$ denotes the cardinality.)
Since $(M-1)\log\alpha \leq \log(1/\varepsilon) < M \log\alpha$, 
\begin{equation*}
  \begin{split}
   \overline{\mdim}_{\mathrm{M}}(\mathcal{X},\sigma_1,d) &= 
   \limsup_{\varepsilon \to 0} \left(\lim_{N\to \infty} \frac{\log \#(\mathcal{X},d_N^{\sigma_1},\varepsilon)}{N\log(1/\varepsilon)}\right) \\
  & \leq  \lim_{M \to \infty} 
  \left(\lim_{N\to \infty} \frac{\log |\pi_{(-M,N+M) \times (-M,M)} (\mathcal{X})|}{N (M-1) \log\alpha} \right) \\
  &   = \frac{2h_{\mathrm{top}}(\mathcal{X},\sigma_1,\sigma_2)}{\log\alpha}.   
   \end{split}
\end{equation*}

\subsection{Step 2: 
Proof of $\underline{\mdim}_{\mathrm{H}}(\mathcal{X},\sigma_1,d) \geq 2 h_{\mathrm{top}}(\mathcal{X},\sigma_1,\sigma_2)/\log\alpha$.}

First we prepare some terminologies about the geometry of $\mathbb{Z}^2$.
In this subsection {\bf rectangles} mean sets of the form $[a,b]\times [c,d]$ in $\mathbb{Z}^2$ for integers $a\leq b$ and $c\leq d$.
For a rectangle $R= [a,b]\times [c,d]$ we define a new rectangle $3R$ by
\begin{equation*}
    3R   = [2a-b, 2b-a] \times [2c-d, 2d-c]. 
\end{equation*}         
We have $|3R| = (3b-3a+1)(3d-3c+1) \leq 9 |R|$.

For two rectangles $R= [a,b]\times [c,d]$ and $R' = [a',b']\times [c',d']$, we denote by $R\leq R'$ if 
$b-a\leq b'-a'$ and $d-c\leq d'-c'$.
This defines an order among rectangles.
(Strictly speaking, this is a ``pre-order'' because $R\leq R'$ and $R'\leq R$ does not imply $R=R'$.)
A set of rectangles $\{R_1,\dots, R_n\}$ is said to be \textbf{totally ordered} if
any two elements are comparable, i.e. for any $R_i$ and $R_j$ we have either $R_i\leq R_j$ 
or $R_j\leq R_i$.

The following trivial fact will be used later:
Suppose $\{R_1, \dots, R_n\}$ is totally ordered. 
If a set of rectangles $\{R'_1,\dots, R'_{n'}\}$ has the property that 
each $R'_i$ is a parallel translation of some $R_j$ (namely $R'_i = u + R_j$ for some $u\in \mathbb{Z}^2$)
then $\{R'_1, \dots, R'_{n'}\}$ is also totally ordered.

The next lemma is a kind of finite Vitali covering lemma (\cite[Lemma 2.27]{Einsiedler--Ward}) adapted to our situation.

\begin{lemma} \label{lemma: covering lemma}
Suppose a set of rectangles $\{R_1, \dots, R_n\}$ is totally ordered.
Then we can find a disjoint subfamily $\{R_{i_1}, \dots, R_{i_m}\}$ satisfying
\[   R_1\cup \dots \cup R_n  \subset 3R_{i_1}\cup 3R_{i_2} \cup \dots \cup 3R_{i_m}. \]
Note that this implies 
\[  |R_{i_1}\cup \dots \cup R_{i_m}| \geq \frac{1}{9} |R_1\cup \dots \cup R_n|. \]
\end{lemma}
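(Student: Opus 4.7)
The plan is to run a greedy selection on the rectangles processed in decreasing order of size, exactly as in the classical Vitali covering lemma, and use the total order to make sense of ``decreasing size''. Because the family $\{R_1,\dots,R_n\}$ is totally ordered, we may relabel it so that $R_1\geq R_2\geq\cdots\geq R_n$. I would then inductively construct the subfamily $\{R_{i_1},\dots,R_{i_m}\}$: set $i_1=1$, and having chosen $i_1<\cdots<i_k$, let $i_{k+1}$ be the smallest index greater than $i_k$ such that $R_{i_{k+1}}$ is disjoint from $R_{i_1}\cup\cdots\cup R_{i_k}$, terminating when no such index exists. By construction the selected rectangles are pairwise disjoint.

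The core geometric claim is then: for every $j\in\{1,\dots,n\}$, there exists a selected $R_{i_k}$ with $R_{i_k}\geq R_j$ such that $R_j\cap R_{i_k}\neq\emptyset$, and this already forces $R_j\subset 3R_{i_k}$. The existence of such an $R_{i_k}$ is immediate from the greedy rule: if $j$ itself was selected we take $i_k=j$; otherwise $j$ was rejected at some stage, meaning it met a previously selected $R_{i_k}$ with $i_k<j$, so $R_{i_k}\geq R_j$ by our ordering. For the containment, write $R_{i_k}=[a,b]\times[c,d]$ and $R_j=[a',b']\times[c',d']$ with $b'-a'\leq b-a$ and $d'-c'\leq d-c$. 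Any intersection point $(x,y)$ satisfies $a\leq x\leq b$ and $a'\leq x\leq b'$, so
\begin{equation*}
a'\geq a-(x-a')\geq a-(b'-a')\geq a-(b-a)=2a-b,
\end{equation*}
and symmetrically $b'\leq 2b-a$, $c'\geq 2c-d$, $d'\leq 2d-c$, which is exactly $R_j\subset 3R_{i_k}$. Hence $R_1\cup\cdots\cup R_n\subset 3R_{i_1}\cup\cdots\cup 3R_{i_m}$, and the cardinality bound follows from $|3R_{i_\ell}|\leq 9|R_{i_\ell}|$ together with the disjointness of the $R_{i_\ell}$.

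I do not expect a real obstacle here: the only slightly delicate point is the asymmetric definition of $3R$ on the discrete lattice, which is why I would carry out the intersection inequalities explicitly rather than appeal to a continuous Vitali statement. The total-order hypothesis is used in exactly one place, namely to guarantee that a rejected rectangle has been rejected because of a \emph{larger} selected one; this is what makes the elementary geometric inclusion $R_j\subset 3R_{i_k}$ valid.
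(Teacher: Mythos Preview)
Your proof is correct and follows essentially the same greedy Vitali-type argument as the paper: both select rectangles in decreasing order of the pre-order $\leq$, discarding those that meet an already-selected one, and both use the observation that if $R_{i_k}\geq R_j$ and $R_{i_k}\cap R_j\neq\emptyset$ then $R_j\subset 3R_{i_k}$. The only cosmetic difference is that you phrase the selection via a fixed relabeling and ``smallest index $>i_k$'', whereas the paper says ``largest remaining disjoint rectangle''; these yield the same subfamily, and you additionally spell out the coordinate inequalities for the inclusion $R_j\subset 3R_{i_k}$ that the paper leaves to the reader.
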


\begin{proof}
We use a simple greedy algorithm.
We first choose (one of) the largest rectangle, say $R_{i_1}$.
Next, suppose we have chosen $R_{i_1}, \dots, R_{i_k}$.
We choose as $R_{i_{k+1}}$ the largest rectangle disjoint to $R_{i_1}\cup \dots \cup R_{i_k}$.
If there is no such a rectangle, the algorithm stops.

Suppose the algorithm stops after $m$ steps.
For any $R_j$ there exists $R_{i_k}$ with $R_{i_k}\geq R_j$ and $R_{i_k}\cap R_j \neq \emptyset$.
This implies $R_j\subset 3R_{i_k}$.
\end{proof}

For two sets $\Omega, \Lambda\subset \mathbb{Z}^2$ we define $\partial_\Lambda \Omega$ as the set of 
$u\in \mathbb{Z}^2$ such that $u+\Lambda$ has non-empty intersections both with $\Omega$ and $\mathbb{Z}^2\setminus \Omega$.
We set $\mathrm{Int}_\Lambda \Omega = \Omega\setminus \partial_\Lambda \Omega$.
This is the set of $u\in \Omega$ with $u+\Lambda \subset \Omega$.

Let $R\subset \mathbb{Z}^2$ be a rectangle.
A subset $C\subset \mathcal{X}$ is called a \textbf{cylinder over $R$} if there is $x\in \mathcal{X}$ such that 
$C$ is equal to the set of $y\in \mathcal{X}$ satisfying $\pi_R(y)  = \pi_R(x)$. 

Set 
\[   s = \frac{2h_{\mathrm{top}}(\mathcal{X},\sigma_1,\sigma_2)}{\log\alpha}. \]
Suppose $\underline{\mdim}_{\mathrm{H}}(\mathcal{X},\sigma_1,d) < s$. We would like to get a contradiction.
We fix $\varepsilon>0$ satisfying $\underline{\mdim}_{\mathrm{H}}(\mathcal{X},\sigma_1,d) < s-2\varepsilon$.

\begin{lemma}  \label{lemma: finding rectangles}
For any finite subset $\Lambda \subset \mathbb{Z}^2$ and any positive number $L$, we can find rectangles
$R_1, \dots, R_M \subset \mathbb{Z}^2$ and subsets $C_1,\dots, C_M\subset \mathcal{X}$ such that
\begin{itemize}
   \item Each $C_m$ is a cylinder over $R_m$ and they satisfy $\mathcal{X} = \bigcup_{m=1}^M C_m$.
   \item All the rectangles $R_m$ contain the origin, and they are all sufficiently large so that 
           \[  |\partial_\Lambda R_m| < \frac{|R_m|}{L}, \quad |R_m| > L. \]
   \item The rectangles $R_1, \dots, R_M$ are totally ordered and satisfy         
   \[  \sum_{m=1}^M \alpha^{-\frac{1}{2}(s-\varepsilon)|R_m|} < 1. \]
\end{itemize}
\end{lemma}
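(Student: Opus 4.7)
The plan is to exploit the hypothesis $\underline{\mdim}_{\mathrm{H}}(\mathcal{X}, \sigma_1, d) < s - 2\varepsilon$ to produce, for a suitably large $N$, an efficient cover of $\mathcal{X}$ in the $d_N^{\sigma_1}$-metric, and then to upgrade each open set in that cover to a cylinder over a rectangle that is ``long-and-thin'' in the $\sigma_1$-direction.

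Fix a large integer $K_0 > (s-\varepsilon)/(2\varepsilon)$, to be chosen additionally large relative to $|\Lambda|$ and $L$, and set $\delta = \alpha^{-K_0+1}$. The hypothesis gives $\liminf_{N\to\infty}\dim_{\mathrm{H}}(\mathcal{X}, d_N^{\sigma_1}, \delta)/N < s - 2\varepsilon$, so there exist arbitrarily large $N$ with $\mathcal{H}^{N(s-2\varepsilon)}_\delta(\mathcal{X}, d_N^{\sigma_1}) < 1$. Choose such an $N$ very large compared to $K_0$, $|\Lambda|$, and $L$, and cover $\mathcal{X}$ by open sets $U_1,\dots,U_M$ of $d_N^{\sigma_1}$-diameter less than $\delta$ satisfying $\sum_i (\diam_{d_N^{\sigma_1}} U_i)^{N(s-2\varepsilon)} < 1$. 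For each $i$, let $K_i$ be the integer with $\alpha^{-K_i} \le \diam_{d_N^{\sigma_1}} U_i < \alpha^{-K_i+1}$ (so $K_i \ge K_0$), and set
\[
R_i = [-K_i+1,\, N+K_i-2] \times [-K_i+1,\, K_i-1].
\]
Unpacking the definition of $d_N^{\sigma_1}$ shows that any two points of $U_i$ must agree on $R_i$, so $U_i$ is contained in a cylinder $C_i$ over $R_i$, and the $C_i$ cover $\mathcal{X}$.

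Every $R_i$ contains the origin, and $\{R_i\}$ is totally ordered because both dimensions of $R_i$ are monotone in $K_i$. Since $|\partial_\Lambda R_i|$ is bounded above by $|\Lambda|$ times the perimeter of $R_i$, which is $O(N + K_i)$, while $|R_i| = (N + 2K_i - 2)(2K_i - 1)$ grows like $2K_iN$, the bounds $|R_i| > L$ and $|\partial_\Lambda R_i| < |R_i|/L$ hold once $K_0$ and then $N$ are taken sufficiently large.

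The crucial step is the summation estimate, where the choice $K_0 > (s-\varepsilon)/(2\varepsilon)$ enters. A direct expansion gives
\[
\frac{1}{2}(s-\varepsilon)|R_i| - (s-2\varepsilon) K_i N \;=\; N\left(\varepsilon K_i - \frac{s-\varepsilon}{2}\right) + (s-\varepsilon)(2K_i-1)(K_i-1) \;\ge\; 0
\]
for all $K_i \ge K_0$, hence $\alpha^{-\frac{1}{2}(s-\varepsilon)|R_i|} \le \alpha^{-(s-2\varepsilon) K_i N} \le (\diam_{d_N^{\sigma_1}} U_i)^{N(s-2\varepsilon)}$, and summing over $i$ yields $\sum_i \alpha^{-\frac{1}{2}(s-\varepsilon)|R_i|} < 1$. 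The main obstacle is tuning $K_0$ and $N$ so that the exponent inequality, the boundary-to-area ratio, and the cardinality bound $|R_i| > L$ all cooperate simultaneously; once $K_0 > (s-\varepsilon)/(2\varepsilon)$ is fixed, everything else falls into place by taking $N$ sufficiently large.
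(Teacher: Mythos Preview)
Your argument is correct and follows essentially the same route as the paper's proof: choose a threshold $K_0$ (the paper's $r_0$), use the mean Hausdorff dimension hypothesis to find an efficient $d_N^{\sigma_1}$-cover at scale $\alpha^{-K_0}$, and convert each set in the cover into a cylinder over the rectangle $[-K_i+1,N+K_i-2]\times[-K_i+1,K_i-1]$; the exponent comparison $\tfrac{1}{2}(s-\varepsilon)|R_i|\ge (s-2\varepsilon)K_iN$ is the same one the paper uses (indeed your computation is a bit sharper, requiring only $K_0>(s-\varepsilon)/(2\varepsilon)$ rather than the paper's $(s-2\varepsilon)r<(s-\varepsilon)(r-1)$). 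One small imprecision: the bound ``$|\partial_\Lambda R_i|\le |\Lambda|\cdot(\text{perimeter})$'' is not literally true (the constant depends on the diameter of $\Lambda$, not its cardinality), but with the correct constant the estimate goes through exactly as you describe.
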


\begin{proof}
We choose a natural number $r_0$ such that 
\begin{itemize}
   \item Every $r\geq r_0$ satisfies $(s-2\varepsilon)r < (s-\varepsilon)(r-1)$.
   \item If a rectangle $R=[a,b]\times [c,d]\subset \mathbb{Z}^2$ satisfies $b-a\geq r_0$ and $d-c\geq r_0$ then 
          \[  |\partial_\Lambda R| < \frac{|R|}{L},  \quad |R|>L. \]
\end{itemize}
From $\underline{\mdim}_{\mathrm{H}}(\mathcal{X},\sigma_1,d) < s-2\varepsilon$, we can find $N>0$ satisfying 
\[  \frac{1}{N} \dim_{\mathrm{H}}(\mathcal{X}, d^{\sigma_1}_N, \alpha^{-r_0}) < s-2\varepsilon. \]
This implies that there exists a covering $\mathcal{X} = E_1\cup \dots \cup E_M$ satisfying 
\[  \diam (E_m, d^{\sigma_1}_N) < \alpha^{-r_0} \> (\forall 1\leq m \leq M), \quad 
    \sum_{m=1}^M \left(\diam(E_m, d^{\sigma_1}_N)\right)^{(s-2\varepsilon)N} < 1. \]
Set $\alpha^{-r_m} := \diam (E_m,d^{\sigma_1}_N)$.
Then $r_m$ is a natural number with $r_m>r_0$.
Choose a point $x_m$ from each $E_m$, and let $C_m\subset \mathcal{X}$ be a cylinder over the rectangle
\[  R_m := [-r_m+1, N+r_m-2]\times [-r_m+1, r_m-1] \]
defined by $C_m = \pi_{R_m}^{-1}(\pi_{R_m}(x_m))$.
Then $E_m\subset C_m$ and hence $\mathcal{X} = C_1\cup \dots \cup C_m$.
The rectangles $R_m$ are totally ordered ($R_m\leq R_{m'}$ if and only if $r_m\leq r_{m'}$).

Recall that $r_m >r_0$ for all $1\leq m \leq M$.
From the choice of $r_0$, 
\[  |\partial_\Lambda R_m| <  \frac{|R_m|}{L}, \quad |R_m| > L.   \]
From $|R_m| = (N+2r_m-2) (2r_m-1) \geq  N(2r_m-1)$, 
\begin{equation*}
   \begin{split}
   \frac{1}{2}(s-\varepsilon) |R_m|  & \geq \frac{1}{2}(s-\varepsilon) N (2r_m-1) \\
          &  >  \frac{1}{2} (s-2\varepsilon)N (2r_m)   \quad 
          \text{by the choice of $r_0$} \\
          & = (s-2\varepsilon) N r_m. 
   \end{split}
\end{equation*}   
Hence 
\[  \alpha^{-\frac{1}{2}(s-\varepsilon)|R_m|} < \alpha^{-(s-2\varepsilon)N r_m} 
    =  \left(\diam(E_m, d^{\sigma_1}_N)\right)^{(s-2\varepsilon)N} .\]
Therefore 
\[   \sum_{m=1}^M \alpha^{-\frac{1}{2}(s-\varepsilon)|R_m|}  
      <    \sum_{m=1}^M \left(\diam(E_m, d^{\sigma_1}_N)\right)^{(s-2\varepsilon)N} < 1. \]
\end{proof}

We choose a real number $0<\delta<1/2$ and a natural number $p$ satisfying the following conditions.
\begin{equation}  \label{eq: choices of delta and p}
  \left(\frac{17}{18}\right)^{p} < \delta, \quad
 H(\delta) + \delta \log p < \frac{\varepsilon}{8} \log \alpha, \quad 
 |A|^\delta < \alpha^{\varepsilon/8}.
\end{equation}
Here $H(\delta) = -\delta \log \delta - (1-\delta)\log (1-\delta)$.
(Recall that the base of the logarithm is two.)
The first condition is satisfied for $p  \approx \log (1/\delta)$.
Then we choose a sufficiently small $\delta$ satisfying the second and third conditions.

By using Lemma \ref{lemma: finding rectangles} iteratively, we find rectangles $R_{i, m}$ and subsets $C_{i, m}\subset \mathcal{X}$
for $i=1, \dots, p$ and $m=1,\dots, M_i$ (where $M_i$ is a natural number depending on $i$) satisfying the following conditions.

\begin{enumerate}
   \item[(a)] Each $C_{i, m}$ is a cylinder over $R_{i, m}$. For each $1\leq i\leq p$ we have $\mathcal{X}= \bigcup_{m=1}^{M_i} C_{i,m}$.
   \item[(b)] For each $1\leq i\leq p$, the rectangles $R_{i, 1}, R_{i,2}, \dots, R_{i, M_i}$ are totally ordered and satisfy
   \begin{equation}  \label{eq: too efficient covering}
        \sum_{m=1}^{M_i} \alpha^{-\frac{1}{2}(s-\varepsilon)|R_{i, m}|} < 1.   
   \end{equation}
   \item[(c)]  All the rectangles $R_{i,m}$ contain the origin and they satisfy $|R_{i, m}| > 1/\delta$.
   \item[(d)]  Set $\hat{R}_i = \bigcup_{m=1}^{M_i} R_{i, m}$. Then for all $j<i$ and $m=1,\dots, M_j$ we have 
   \[     |\partial_{\hat{R}_i} R_{j, m}| < \frac{\delta}{4} |R_{j, m}|. \]
\end{enumerate}

Roughly speaking, the condition (d) means that the rectangles in one level (say, $j$) are much larger than the
rectangles in higher levels (say, $i>j$). 
The construction goes from the level $p$ to the bottom.
First, by Lemma \ref{lemma: finding rectangles}, we construct $R_{p, m}$ and $C_{p,m}$.
Next, by using the lemma again, we construct $R_{p-1, m}$ and $C_{p-1,m}$.
We continue this process until we come to the first level ($R_{1,m}$ and $C_{1,m}$). 
The condition (d) connects the constructions in different levels.

\begin{lemma} \label{lemma: iterated covering lemma}
If $N>0$ is sufficiently large then the following statement holds.
For each $x\in \mathcal{X}$ we can choose a subset
\[   D(x)\subset \{(u, i, m)|\, u\in [0,N-1]^2, 1\leq i\leq p, 1\leq m\leq M_i\} \] 
such that 
  \begin{enumerate}
     \item For $(u,i,m)\in D(x)$, we have $\sigma^u(x)\in C_{i,m}$
             and $u+R_{i,m} \subset [0,N-1]^2$.
     \item If $(u, i, m)$ and $(u', i', m')$ are two different elements of $D(x)$, then $(u+R_{i,m}) \cap (u'+R_{i',m'}) = \emptyset$.
     In particular (recall that $R_{i,m}$ contain the origin), $u\neq u'$.
     \item We have 
     \[   \left|[0,N-1]^2 \setminus \bigcup_{(u,i,m)\in D(x)} (u+R_{i,m})\right|  < \delta N^2. \]
  \end{enumerate}
\end{lemma}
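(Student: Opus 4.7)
The plan is to build $D(x) = D_1(x) \cup \cdots \cup D_p(x)$ inductively across the $p$ levels, performing at each level a greedy Vitali-style selection via Lemma \ref{lemma: covering lemma}. Level~$1$ rectangles are the largest, and the essential input from the earlier construction is condition~(d), which ensures that the rectangles chosen at any level $j$ have small $\hat{R}_{i}$-boundary for every later level $i > j$. The properties (1) and (2) will be built in by construction; the whole difficulty lies in achieving (3).

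Concretely, set $\Omega_0 = [0,N-1]^2$ and let $\Omega_i = [0,N-1]^2 \setminus \bigcup_{j \leq i}\bigcup_{(u,j,m)\in D_j(x)}(u+R_{j,m})$ be the uncovered region after processing the first $i$ levels. At step $i+1$, consider the ``safe'' set
\[ S_{i+1} = \{ v \in \Omega_i : v + \hat{R}_{i+1} \subset \Omega_i \}. \]
For each $v \in S_{i+1}$ pick $m_{i+1}(v)$ with $\sigma^v(x) \in C_{i+1, m_{i+1}(v)}$, which is possible by condition~(a). Then $v+R_{i+1,m_{i+1}(v)} \subset v+\hat{R}_{i+1} \subset \Omega_i$, and since $0 \in R_{i+1,m_{i+1}(v)}$ this candidate rectangle contains $v$ itself, so the union of candidates covers $S_{i+1}$. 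These candidates are translates of the totally ordered family $\{R_{i+1,m}\}_m$, hence themselves totally ordered, and Lemma \ref{lemma: covering lemma} extracts a pairwise disjoint subfamily $D_{i+1}(x)$ whose union has size at least $|S_{i+1}|/9$.

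To iterate, we estimate $|S_{i+1}|$ from below. Its complement in $\Omega_i$ lies in $\partial_{\hat{R}_{i+1}}\Omega_i$, which is contained in the union of $\partial_{\hat{R}_{i+1}}[0,N-1]^2$ and $\bigcup_{j \leq i}\bigcup_{(u,j,m)\in D_j(x)} \partial_{\hat{R}_{i+1}}(u+R_{j,m})$. The first set has size $O(N)$, with constants depending on $\hat{R}_{i+1}$ but not on $N$. For the second, translation invariance and condition~(d) give $|\partial_{\hat{R}_{i+1}}(u+R_{j,m})| < \tfrac{\delta}{4}|R_{j,m}|$, and since the previously placed rectangles are disjoint subsets of $[0,N-1]^2$, the total is at most $\tfrac{\delta}{4} N^2$. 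Hence
\[ |\Omega_{i+1}| \leq |\Omega_i| - \tfrac{1}{9}\left( |\Omega_i| - \tfrac{\delta}{4}N^2 - O(N) \right) = \tfrac{8}{9}|\Omega_i| + \tfrac{\delta}{36}N^2 + O(N). \]
Unfolding the recursion yields $|\Omega_p| \leq (8/9)^p N^2 + \tfrac{\delta}{4} N^2 + O(N)$, and because $(8/9)^p \leq (17/18)^p < \delta$ by (\ref{eq: choices of delta and p}), the right-hand side is strictly less than $\delta N^2$ for every sufficiently large $N$, which gives property~(3).

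The step I expect to be the main obstacle is the boundary control in the previous paragraph: we need the total thickened-boundary contribution from all previously placed rectangles to stay bounded by a fixed fraction of $N^2$ \emph{uniformly in $i$}, so that the geometric factor $8/9$ from Vitali is not gradually eaten away. This is exactly what condition~(d) provides, by giving a pointwise bound $|\partial_{\hat{R}_{i+1}}R_{j,m}| < \tfrac{\delta}{4}|R_{j,m}|$ that is translation-invariant; summing over the disjoint placed rectangles telescopes to a $\tfrac{\delta}{4}N^2$ bound regardless of how many rectangles have been placed. Without this uniformity the scheme would fail after a few iterations.
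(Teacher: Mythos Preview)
Your approach is essentially the paper's: iterated Vitali selection across the $p$ levels, with the boundary of the uncovered region controlled via condition~(d). The only real difference is cosmetic: the paper casts the final estimate as a contradiction (assume $|E_p|\geq\delta N^2$, then $|\partial_{\hat R_i}E_{i-1}|<\tfrac12|E_{i-1}|$, so $|E_i|<\tfrac{17}{18}|E_{i-1}|$), whereas you unroll a direct recursion with additive error and the sharper contraction factor $8/9$.

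There is one numerical slip in your last step. From $(8/9)^p\leq(17/18)^p<\delta$ alone you only get $|\Omega_p|<\delta N^2+\tfrac{\delta}{4}N^2+O(N)$, which is not below $\delta N^2$. What you actually need is $(8/9)^p<\tfrac{3\delta}{4}$. This does hold, but for a slightly finer reason: since $8/9=288/324<289/324=(17/18)^2$, we have $(8/9)^p<\bigl((17/18)^p\bigr)^2<\delta^2<\tfrac{\delta}{2}$ (using $\delta<1/2$), and then $(8/9)^pN^2+\tfrac{\delta}{4}N^2+O(N)<\tfrac{3\delta}{4}N^2+O(N)<\delta N^2$ for large $N$.
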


\begin{proof}
Let $N$ be sufficiently large so that 
\begin{equation} \label{eq: choosing large N}
   \left|\partial_{\hat{R}_i} [0,N-1]^2\right| < \frac{\delta}{4} N^2 \quad   \text{for all $1\leq i \leq p$}.
\end{equation}
Here recall that $\hat{R}_i = \bigcup_{m=1}^{M_i} R_{i,m}$.
Fix $x\in \mathcal{X}$.
Set $E_0 = [0,N-1]^2$.
We will inductively construct $E_0\supset E_1\supset E_2 \supset \dots \supset E_p$.

Suppose we have defined $E_0, E_1, \dots, E_{i-1}$.  Consider the following set of rectangles:
\begin{equation}  \label{eq: covering by rectangles}
 \left\{u+ R_{i, m}|\, u\in E_{i-1}, m\in [1, M_i]  \text{ with } \sigma^u(x)\in C_{i, m} \text{ and }  u+R_{i, m} \subset E_{i-1}\right\}. 
\end{equation} 
Since $R_{i, 1}, \dots, R_{i, M_i}$ are totally ordered, so is (\ref{eq: covering by rectangles}). (Here the point is that $i$ is fixed.)
The rectangles (\ref{eq: covering by rectangles}) cover $\mathrm{Int}_{\hat{R}_i} E_{i-1}$. 
Then by Lemma \ref{lemma: covering lemma}, we can find a subset 
\[  D_i(x) \subset \left\{(u, m)|\, u\in E_{i-1}, 1\leq m\leq M_i \right\} \]
such that 
\begin{itemize}
   \item For $(u,m)\in D_i(x)$, we have $\sigma^u(x)\in C_{i, m}$ and $u+R_{i,m}\subset E_{i-1}$.
   \item If $(u,m)$ and $(u',m')$ are two different elements of $D_i(x)$ then $(u+R_{i,m}) \cap (u'+R_{i, m'}) = \emptyset$.
   \item The rectangles $u+R_{i,m}$, $(u,m)\in D_i(x)$, cover at least one-ninth of $\mathrm{Int}_{\hat{R}_i} E_{i-1}$:
   \begin{equation}  \label{eq: one-ninth is covered}
     \left|\bigcup_{(u,m)\in D_i(x)} (u+ R_{i,m}) \right| \geq \frac{1}{9} \left|\mathrm{Int}_{\hat{R}_i} E_{i-1}\right|.
   \end{equation}
\end{itemize}
We set 
\[  E_i = E_{i-1}\setminus \bigcup_{(u, m)\in D_i(x)} (u+R_{i,m}). \]

We define $D(x)$ by 
\[  D(x) = \left\{(u, i, m)|\, 1\leq i\leq p, (u, m) \in D_i(x)\right\}. \]
The properties (1) and (2) of $D(x)$ immediately follow from the construction.
The property (3) is equivalent to the claim that $|E_{p}| < \delta N^2$. 
We will prove this.

Suppose $|E_{p}|\geq \delta N^2$.
Then we also have $|E_{i-1}| \geq \delta N^2$ for all $1\leq i\leq p$.
We estimate $\left|\partial_{\hat{R}_i} E_{i-1}\right|$ for $1\leq i\leq p$.
We have 
\[  \partial_{\hat{R}_i} E_{i-1} \subset \partial_{\hat{R}_i} [0,N-1]^2 \cup 
     \bigcup_{j=1}^{i-1} \bigcup_{(u, m)\in D_j(x)} \partial_{\hat{R}_i} (u+R_{j, m}). \]
Recall (\ref{eq: choosing large N}) and $|\partial_{\hat{R}_i} R_{j,m}| < (\delta/4) |R_{j,m}|$ for 
$j < i$ by the condition (d) of the choice of $R_{i,m}$.
Then 
\begin{equation*}
  \begin{split}
    \left|\partial_{\hat{R}_i} E_{i-1}\right| & \leq \left|\partial_{\hat{R}_i} [0,N-1]^2\right| 
    + \sum_{j=1}^{i-1} \sum_{(u,m)\in D_j(x)} \left| \partial_{\hat{R}_i} (u+R_{j, m})\right| \\
    & <     \frac{\delta}{4} N^2 + \frac{\delta}{4}  \sum_{j=1}^{i-1} \sum_{(u,m)\in D_j(x)} \left|u+R_{j, m}\right|.
   \end{split} 
\end{equation*}    
The rectangles $u+R_{j,m}$, $1\leq j\leq i-1$ and $(u,m)\in D_j(x)$, are disjoint and contained in $[0,N-1]^2$.
Therefore 
\[  \sum_{j=1}^{i-1} \sum_{(u,m)\in D_j(x)} \left|u+R_{j, m}\right|  \leq N^2. \]
Thus $\left|\partial_{\hat{R}_i} E_{i-1}\right|  < (\delta/2)N^2$.
Since we assumed $|E_{i-1}|\geq \delta N^2$, we have $\left|\partial_{\hat{R}_i} E_{i-1}\right|  <(1/2) |E_{i-1}|$.
Namely 
\[   \left|\mathrm{Int}_{\hat{R}_i} E_{i-1}\right| > \frac{1}{2} |E_{i-1}|. \]

From (\ref{eq: one-ninth is covered}),
\[    \left|\bigcup_{(u,m)\in D_i(x)} (u+ R_{i,m}) \right| \geq \frac{1}{9} \left|\mathrm{Int}_{\hat{R}_i} E_{i-1}\right| 
       > \frac{1}{18} |E_{i-1}|. \]
So we get 
\[  |E_i| = \left|E_{i-1}\setminus \bigcup_{(u, m)\in D_i(x)} (u+R_{i,m})\right| < \frac{17}{18} |E_{i-1}|. \]
This holds for all $1\leq i\leq p$. Therefore 
\[  |E_p| < \left(\frac{17}{18}\right)^p |E_0| = \left(\frac{17}{18}\right)^p N^2. \]
Recall that $p$ satisfies $(17/18)^p < \delta$ by (\ref{eq: choices of delta and p}).
So $|E_p| < \delta N^2$. This is a contradiction.
\end{proof}

In the rest of this subsection, $N$ is assumed to be so large that the statement of Lemma \ref{lemma: iterated covering lemma} holds.
For each $x\in \mathcal{X}$ we define $\underline{D}(x)\subset [0,N-1]^2\times [1, p]$
as the set of $(u, i) \in [0,N-1]^2\times [1,p]$ such that there exists $m\in [1,M_i]$ with $(u, i, m)\in D(x)$.
(Notice that the sets $D(x)$ and $\underline{D}(x)$ depend on $N$. So it might be better to use the notations
$D^{(N)}(x)$ and $\underline{D}^{(N)}(x)$. But we prefer the simpler ones here.)

\begin{lemma}  \label{lemma: possibilities of underline_D}
If $N$ is sufficiently large then the number of possibilities of $\underline{D}(x)$ is bounded as follows:
\[  \left|\left\{\underline{D}(x) |\, x\in \mathcal{X} \right\}\right|    < \alpha^{(\varepsilon/8) N^2}. \]
\end{lemma}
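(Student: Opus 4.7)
The plan is to bound $|\underline{D}(x)|$ by roughly $\delta N^2$, and then count the possibilities for $\underline{D}(x)$ by regarding it as the graph of a partial function from $[0,N-1]^2$ to $[1,p]$ with a small domain. The three numerical constraints baked into the choice of $\delta$ and $p$ in (\ref{eq: choices of delta and p}) are tailored exactly for this estimate, so the main work is combinatorial bookkeeping rather than the introduction of new ideas.

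First I would argue that $|\underline{D}(x)| < \delta N^2$. By property (2) of Lemma \ref{lemma: iterated covering lemma}, the rectangles $\{u+R_{i,m} : (u,i,m)\in D(x)\}$ are pairwise disjoint subsets of $[0,N-1]^2$, so $\sum_{(u,i,m)\in D(x)} |R_{i,m}| \leq N^2$. Since each $|R_{i,m}|>1/\delta$ by condition (c) of the construction of the rectangles, this forces $|D(x)| < \delta N^2$. Moreover, the projection map $(u,i,m) \mapsto (u,i)$ is injective on $D(x)$: if two distinct elements shared the same $u$, then the two corresponding rectangles would both contain $u$ (because every $R_{i,m}$ contains the origin), contradicting the disjointness in property (2). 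Hence $|\underline{D}(x)| = |D(x)| < \delta N^2$.

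Next I would observe that for the very same reason, for each fixed $u \in [0,N-1]^2$ there is at most one index $i\in[1,p]$ with $(u,i)\in \underline{D}(x)$. Therefore $\underline{D}(x)$ is nothing but the graph of a partial function $f_x : [0,N-1]^2 \to [1,p]$ whose domain has cardinality strictly less than $\delta N^2$.

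Finally, I would count such partial functions directly. Their total number is bounded by
\[
\sum_{k=0}^{\lfloor \delta N^2\rfloor} \binom{N^2}{k}\, p^k \;\leq\; p^{\delta N^2}\cdot 2^{H(\delta) N^2} \;=\; 2^{(H(\delta)+\delta\log p) N^2},
\]
using the standard entropy estimate $\sum_{k\leq \delta n}\binom{n}{k}\leq 2^{H(\delta)n}$, valid since $\delta<1/2$ by (\ref{eq: choices of delta and p}). The second inequality in (\ref{eq: choices of delta and p}) gives $H(\delta)+\delta\log p < (\varepsilon/8)\log\alpha$, so the above bound is at most $\alpha^{(\varepsilon/8)N^2}$, as required. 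I do not anticipate any genuine obstacle: Lemma \ref{lemma: iterated covering lemma} was designed so that $\underline{D}(x)$ lives in a combinatorially small space, and the thresholds $\delta$ and $p$ were chosen precisely so that the entropy count comes out below $\alpha^{(\varepsilon/8)N^2}$.
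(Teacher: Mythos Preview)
Your proof is correct and follows essentially the same approach as the paper: bound $|D(x)|<\delta N^2$ via disjointness and condition (c), note that distinct elements of $D(x)$ have distinct base points $u$, and then count subsets of $[0,N-1]^2\times[1,p]$ of size below $\delta N^2$ using the entropy bound on binomial sums together with the constraint $H(\delta)+\delta\log p<(\varepsilon/8)\log\alpha$. The only cosmetic difference is that you invoke the cumulative bound $\sum_{k\le \delta n}\binom{n}{k}\le 2^{H(\delta)n}$ directly, whereas the paper bounds each term by $2^{N^2 H(\delta)}$ and picks up an extra factor of $N^2$, which is why their version genuinely needs $N$ large while yours does not (beyond what is already required for Lemma~\ref{lemma: iterated covering lemma}).
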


\begin{proof}
We use the well-known bound on the binomial coefficient:
\begin{equation} \label{eq: bound on binom}
     \binom{n}{k} \leq 2^{n H(k/n)}. 
\end{equation}     
This follows from 
\[  1= \left\{\frac{k}{n} + \left(1-\frac{k}{n}\right)\right\}^n \geq 
    \binom{n}{k}  \left(\frac{k}{n}\right)^{k} \left(1-\frac{k}{n}\right)^{n-k} = \binom{n}{k} 2^{-n H(k/n)}. \]

Let $x\in \mathcal{X}$ and set $D(x) = \{(u_1, i_1, m_1), \dots, (u_k, i_k, m_k)\}$.
(Then we have $\underline{D}(x) = \{(u_1, i_1), \dots, (u_k, i_k)\}$.)
By (1) and (2) of Lemma \ref{lemma: iterated covering lemma}, $u_1, \dots, u_k$ are different from each other, and 
the rectangles $u_1+R_{i_1,m_1}, \dots, u_k+R_{i_k, m_k}$ are disjoint and contained in $[0,N-1]^2$.
Since $|R_{i, m}|> 1/\delta$ (the condition (c) of the choice of $R_{i,m}$), we have $k < \delta N^2$.

Then the number of possibilities of $\underline{D}(x)$ is bounded by 
\begin{equation*}
   \begin{split}
   & \underbrace{\left\{\binom{N^2}{1} + \binom{N^2}{2} + \dots + \binom{N^2}{\lfloor \delta N^2\rfloor}\right\}}_{\text{choices of $u_1, \dots, u_k$}}
    \times \underbrace{p^{\delta N^2}}_{\text{choices of $i_1, \dots, i_k$}} \\
   & \leq  N^2\cdot 2^{N^2 H(\delta)} \times p^{\delta N^2}    \quad 
      \text{by (\ref{eq: bound on binom})} \\
   & = N^2\cdot 2^{N^2\left(H(\delta) + \delta \log p\right)}. 
   \end{split} 
\end{equation*}   
We assumed $H(\delta)+\delta\log p < (\varepsilon/8)\log \alpha$ in (\ref{eq: choices of delta and p}).
Hence, if $N$ is sufficiently large then 
\[   N^2\cdot 2^{N^2\left(H(\delta) + \delta \log p\right)}  < 2^{N^2 (\varepsilon/8) \log\alpha} = \alpha^{(\varepsilon/8)N^2}. \]
\end{proof}

Take a subset $E\subset [0,N-1]^2\times [1,p]$ such that there exists $x\in \mathcal{X}$ with $\underline{D}(x)=E$.
We denote by $\mathcal{X}_E$ the set of $x\in \mathcal{X}$ with $\underline{D}(x)=E$.
Let $E=\{(u_1, i_1), (u_2, i_2), \dots, (u_k, i_k)\}$.

\begin{lemma}
  \begin{equation}  \label{eq: bounding the cardinality of X_E}
    \begin{split}
     & \left|\pi_{[0,N-1]^2}(\mathcal{X}_E)\right|  \cdot \alpha^{-\frac{1}{2}(s-\varepsilon)N^2}  \\
        & \leq  |A|^{\delta N^2}  \left(\sum_{m=1}^{M_{i_1}} \alpha^{-\frac{1}{2}(s-\varepsilon)|R_{i_1, m}|} \right) \times \dots \times 
           \left(\sum_{m=1}^{M_{i_k}} \alpha^{-\frac{1}{2}(s-\varepsilon)|R_{i_k, m}|}\right).
    \end{split}
  \end{equation}
\end{lemma}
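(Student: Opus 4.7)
The approach is to bound $|\pi_{[0,N-1]^2}(\mathcal{X}_E)|$ by encoding each $x\in\mathcal{X}_E$ via (i) for each $j=1,\dots,k$, an index $m_j(x)\in[1,M_{i_j}]$ with $(u_j,i_j,m_j(x))\in D(x)$ (any one such choice), together with (ii) the coordinates of $x$ on the ``uncovered'' region $U(\vec{m}(x)) := [0,N-1]^2\setminus\bigcup_{j=1}^k(u_j+R_{i_j,m_j(x)})$. Because $\sigma^{u_j}(x)\in C_{i_j,m_j(x)}$ and the latter is a cylinder over $R_{i_j,m_j(x)}$, the values of $x$ on $u_j+R_{i_j,m_j(x)}$ are forced by $m_j(x)$ alone; hence $\pi_{[0,N-1]^2}(x)$ is determined by (i) and (ii).

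By parts (2) and (3) of Lemma \ref{lemma: iterated covering lemma}, any tuple $\vec{m}=\vec{m}(x)$ arising from some $x\in\mathcal{X}_E$ has its rectangles $u_j+R_{i_j,m_j}$ pairwise disjoint inside $[0,N-1]^2$ and satisfies $|U(\vec{m})|<\delta N^2$. Therefore
\[
|\pi_{[0,N-1]^2}(\mathcal{X}_E)| \leq \sum_{\vec{m}\text{ arising}} |A|^{|U(\vec{m})|}.
\]
The disjointness yields the identity $N^2 = |U(\vec{m})|+\sum_{j=1}^k|R_{i_j,m_j}|$, which I will use to split $\alpha^{-\frac{1}{2}(s-\varepsilon)N^2}$ as $\alpha^{-\frac{1}{2}(s-\varepsilon)|U(\vec{m})|}\prod_j \alpha^{-\frac{1}{2}(s-\varepsilon)|R_{i_j,m_j}|}$.

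Multiplying the displayed inequality by $\alpha^{-\frac{1}{2}(s-\varepsilon)N^2}$ and pushing this factor inside the sum using the identity above, I will then bound $|A|^{|U(\vec{m})|}\leq|A|^{\delta N^2}$ (from $|U(\vec{m})|<\delta N^2$) and use $\alpha^{-\frac{1}{2}(s-\varepsilon)|U(\vec{m})|}\leq 1$ (from $\alpha>1$ and $s-\varepsilon>0$); enlarging the sum to all of $\prod_{j=1}^k[1,M_{i_j}]$ and factoring gives
\[
|A|^{\delta N^2} \prod_{j=1}^k \sum_{m=1}^{M_{i_j}} \alpha^{-\frac{1}{2}(s-\varepsilon)|R_{i_j,m}|},
\]
which is exactly the right-hand side of (\ref{eq: bounding the cardinality of X_E}).

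There is no serious difficulty: the argument is combinatorial bookkeeping. The crucial input is that the (nearly) disjoint covering supplied by Lemma \ref{lemma: iterated covering lemma} is exactly what turns the encoding into a product over $j$, while the small-complement bound $|U(\vec{m})|<\delta N^2$ absorbs the free-choice factor $|A|^{|U|}$ into the harmless constant $|A|^{\delta N^2}$. The only point one must be careful about is that the tuple $\vec{m}(x)$ is only required to exist, not to be unique; this causes merely overcounting in the sum over $\vec{m}$, which is fine because we only need an upper bound.
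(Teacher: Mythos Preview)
Your proposal is correct and follows essentially the same route as the paper: decompose $\mathcal{X}_E$ according to the tuple $\vec{m}$, use that each $C_{i_j,m_j}$ is a cylinder over $R_{i_j,m_j}$ to see that $\pi_{[0,N-1]^2}(x)$ is determined by $\vec{m}$ together with the at most $\delta N^2$ coordinates on the complement, and then factor the resulting sum over $\vec{m}$ into a product over $j$. The only cosmetic difference is that the paper uses the inequality $N^2\geq\sum_j|R_{i_j,m_j}|$ to get $\alpha^{-\frac{1}{2}(s-\varepsilon)N^2}\leq\prod_j\alpha^{-\frac{1}{2}(s-\varepsilon)|R_{i_j,m_j}|}$ directly, whereas you use the exact identity $N^2=|U(\vec{m})|+\sum_j|R_{i_j,m_j}|$ and then discard the factor $\alpha^{-\frac{1}{2}(s-\varepsilon)|U(\vec{m})|}\leq 1$ (valid since $s-\varepsilon>0$, which follows from $0\leq\underline{\mdim}_{\mathrm{H}}<s-2\varepsilon$).
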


\begin{proof}
For ${\bf m} = (m_1, \dots, m_k) \in [1, M_{i_1}]\times \dots \times [1, M_{i_k}]$, we denote by $\mathcal{X}_{E,{\bf m}}\subset \mathcal{X}_E$
the set of $x\in \mathcal{X}_E$ with $D(x) = \{(u_1, i_1, m_1), (u_2, i_2, m_2), \dots, (u_k, i_k, m_k)\}$.
We have $\sigma^{u_j}(x) \in C_{i_j, m_j}$ for $x\in \mathcal{X}_{E,{\bf m}}$.
Hence, over each rectangle $u_j + R_{i_j, m_j}$, the value of $\pi_{u_j + R_{i_j, m_j}}(x)$ $(x\in \mathcal{X}_{E, {\bf m}})$ is fixed.
(Namely we have $\pi_{u_j + R_{i_j, m_j}}(x) = \pi_{u_j + R_{i_j, m_j}}(x')$ for any two $x, x'\in \mathcal{X}_{E,{\bf m}}$.)
Therefore we have 
\[  |\pi_{[0, N-1]^2}\left(\mathcal{X}_{E, {\bf m}}\right)| \leq |A|^{\left|[0, N-1]^2\setminus \bigcup_{j=1}^k (u_j+R_{i_j, m_j})\right|} 
      <  |A|^{\delta N^2}. \]
Here the second inequality follows from the condition (3) of Lemma \ref{lemma: iterated covering lemma}.
We decompose the left-hand side of (\ref{eq: bounding the cardinality of X_E}) as 
\begin{equation} \label{eq: decomposing X_E}
    \begin{split}
     \left|\pi_{[0,N-1]^2}(\mathcal{X}_E)\right|  \cdot \alpha^{-\frac{1}{2}(s-\varepsilon)N^2}  
     & = \sum_{{\bf m}}   \left|\pi_{[0,N-1]^2}(\mathcal{X}_{E,{\bf m}})\right|  \cdot \alpha^{-\frac{1}{2}(s-\varepsilon)N^2} \\
     & \leq \sum_{{\bf m} \text{ with } \mathcal{X}_{E,{\bf m}} \neq \emptyset} |A|^{\delta N^2} \cdot \alpha^{-\frac{1}{2}(s-\varepsilon)N^2} . 
    \end{split}  
\end{equation}      
Take ${\bf m}  = (m_1, \dots, m_k)\in [1, M_{i_1}]\times \dots \times [1, M_{i_k}]$ with $\mathcal{X}_{E,{\bf m}} \neq \emptyset$. 
The rectangles $u_j + R_{i_j, m_j}$ $(1\leq j\leq k)$ are disjoint
and contained in $[0,N-1]^2$ by the conditions (1) and (2) of Lemma \ref{lemma: iterated covering lemma}.
Hence 
\[  N^2 \geq \sum_{j=1}^k |R_{i_j, m_k}|. \]
So 
\[  \alpha^{-\frac{1}{2}(s-\varepsilon)N^2} \leq \prod_{j=1}^k \alpha^{-\frac{1}{2}(s-\varepsilon)|R_{i_j, m_j}|}. \]
Plugging this into (\ref{eq: decomposing X_E}), we get 
\begin{equation*}
       \left|\pi_{[0,N-1]^2}(\mathcal{X}_E)\right|  \cdot \alpha^{-\frac{1}{2}(s-\varepsilon)N^2}  \leq
     \sum_{{\bf m}} |A|^{\delta N^2}  \prod_{j=1}^k \alpha^{-\frac{1}{2}(s-\varepsilon)|R_{i_j, m_j}|}.
\end{equation*}     
The right-hand side is equal to 
\[   |A|^{\delta N^2}   \left(\sum_{m=1}^{M_{i_1}} \alpha^{-\frac{1}{2}(s-\varepsilon)|R_{i_1, m}|} \right) \times \dots \times 
           \left(\sum_{m=1}^{M_{i_k}} \alpha^{-\frac{1}{2}(s-\varepsilon)|R_{i_k, m}|}\right). \]
\end{proof}

We continue the estimates:

\begin{equation*}
   \begin{split}
    \left|\pi_{[0,N-1]^2}(\mathcal{X}_E)\right|  \cdot \alpha^{-\frac{1}{2}(s-\varepsilon)N^2}  
        & \leq  |A|^{\delta N^2}   \left(\sum_{m=1}^{M_{i_1}} \alpha^{-\frac{1}{2}(s-\varepsilon)|R_{i_1, m}|} \right) \times \dots \times 
           \left(\sum_{m=1}^{M_{i_k}} \alpha^{-\frac{1}{2}(s-\varepsilon)|R_{i_k, m}|}\right)   \\
     & <  |A|^{\delta N^2}  \quad \text{by (\ref{eq: too efficient covering})} \\
     & <  \alpha^{(\varepsilon/8)N^2}   \quad 
     \text{since we assumed $|A|^\delta < \alpha^{\varepsilon/8}$ in (\ref{eq: choices of delta and p})}.
   \end{split}         
\end{equation*}

The number of choices of $E\subset [0,N-1]^2\times [1,p]$ with $\mathcal{X}_E \neq \emptyset$ 
is bounded by $\alpha^{(\varepsilon/8)N^2}$
if $N$ is sufficiently large
(Lemma \ref{lemma: possibilities of underline_D}).
Then
\begin{equation*}
   \begin{split}
        \left|\pi_{[0,N-1]^2}(\mathcal{X})\right|  \cdot \alpha^{-\frac{1}{2}(s-\varepsilon)N^2} 
      & = \sum_{E \text{ with } \mathcal{X}_E\neq \emptyset}
         \left|\pi_{[0,N-1]^2}(\mathcal{X}_{E})\right|  \cdot \alpha^{-\frac{1}{2}(s-\varepsilon)N^2} \\
      & <   \alpha^{(\varepsilon/8)N^2} \times \alpha^{(\varepsilon/8)N^2}  = \alpha^{(\varepsilon/4)N^2}. 
    \end{split}
\end{equation*}       
Therefore 
\[    \left|\pi_{[0,N-1]^2}(\mathcal{X})\right|   < \alpha^{\frac{1}{2}\left(s-\frac{\varepsilon}{2}\right)N^2}. \]
Namely 
\[  \frac{\log |\pi_{[0,N-1]^2}(\mathcal{X})|}{N^2}  < \frac{1}{2}\left(s-\frac{\varepsilon}{2}\right)\log\alpha. \]
Letting $N\to \infty$
\[  h_{\mathrm{top}}(\mathcal{X},\sigma_1,\sigma_2) \leq \frac{1}{2}\left(s-\frac{\varepsilon}{2}\right)\log\alpha
     < \frac{1}{2}s \log\alpha = h_{\mathrm{top}}(\mathcal{X},\sigma_1,\sigma_2). \]
This is a contradiction.

\begin{remark}
 \begin{enumerate}
   \item The above proof (in particular, see the proof of Lemma \ref{lemma: finding rectangles})
also shows a (seemingly) slightly stronger statement that
\[  \lim_{\varepsilon \to 0} \left(\inf_{N\geq 1} \frac{\dim_{\mathrm{H}}(\mathcal{X},d^{\sigma_1}_N,\varepsilon)}{N}\right)   
     \geq \frac{2h_{\mathrm{top}}(\mathcal{X},\sigma_1,\sigma_2)}{\log \alpha}. \]     
Combined with Step 1, the both sides actually coincide.
However we do not know whether the left-hand side is an important quantity or not.     
   \item The above proof (in particular, the use of covering argument) is motivated by the proof of the Shannon--McMillan--Breiman 
   theorem (see, e.g. \cite{Ornstein--Weiss, Rudolph, Lindenstrauss pointwise}).
   We expect that there is a proof more directly using the Shannon--McMillan--Breiman theorem (or related measure theoretic ideas)
    although we have not found it so far.
  \end{enumerate} 
\end{remark}

\subsection{Step 3: Proof of $\overline{\rdim}(\mathcal{X},\sigma_1,d,\mu) \leq 2 h_{\mu}(\mathcal{X},\sigma_1,\sigma_2)/\log\alpha$.}

Let $X$ be a random variable taking values in $\mathcal{X}$ and obeying $\mu$.
Let $0<\varepsilon<1$ and take $M>0$ with $\alpha^{-M} < \varepsilon \leq \alpha^{-M+1}$ as in Step 1.
Let $N>0$.
For each point $x\in \pi_{(-M,N+M) \times (-M,M)}(\mathcal{X})$ we choose $q(x)\in \mathcal{X}$ 
with $\pi_{(-M,N+M) \times (-M,M)}(q(x))=x$.
Set $X' = q\left(\pi_{(-M,N+M) \times (-M,M)}(X)\right)$ and 
$Y = (X', \sigma_1 X', \sigma_1^2 X', \dots, \sigma_1^{N-1}X')$.
Then 
\[  \frac{1}{N} \sum_{n=0}^{N-1}d(\sigma_1^n X, Y_n) = \frac{1}{N}\sum_{n=0}^{N-1}d(\sigma_1^n X, \sigma_1^n X') 
     \leq \alpha^{-M} < \varepsilon. \]
\[  I(X;Y) \leq H(Y) =H(X') = H\left\{ (X_u)_{u\in (-M,N+M)\times (-M,M)}\right\}. \]
So 
\[  R(d,\mu,\varepsilon) \leq \frac{I(X;Y)}{N} \leq \frac{1}{N}  H\left\{ (X_u)_{u\in (-M,N+M)\times (-M,M)}\right\},  \]
\[  \frac{R(d,\mu,\varepsilon)}{\log(1/\varepsilon)}  \leq \frac{2M}{\log(1/\varepsilon)} \cdot 
     \frac{1}{2NM} H\left\{ (X_u)_{u\in (-M,N+M)\times (-M,M)}\right\}.  \]
We first take the limit with respect to $N$ and next the limit with respect to $\varepsilon$.
Noting $M/\log(1/\varepsilon)\to 1/\log\alpha$, we get
\[  \overline{\rdim}(\mathcal{X},\sigma_1,d,\mu) \leq \frac{2 h_{\mu}(\mathcal{X},\sigma_1,\sigma_2)}{\log\alpha}. \]

\subsection{Step 4: Proof of $\underline{\rdim}(\mathcal{X},\sigma_1,d,\mu) \geq 2 h_{\mu}(\mathcal{X},\sigma_1,\sigma_2)/\log\alpha$.}

We need the following lemma.

\begin{lemma}  \label{lemma: if X and Y coincide at many sites then I(X;Y) is large}
    Let $N\geq 1$ and $B$ a finite set.
    Let $X=(X_0,\dots, X_{N-1})$ and $Y=(Y_0,\dots, Y_{N-1})$ be random variables taking values in $B^N$ 
     (namely, each $X_n$ and $Y_n$ takes values in $B$) such that for some $0<\delta <1/2$
     \[  \mathbb{E}\left(\text{the number of $0\leq n < N$ with $X_n\neq Y_n$}\right) < \delta N. \]
     Then 
     \[  I(X;Y) > H(X) - N H(\delta) -\delta N \log |B|, \]
     where $H(\delta) = -\delta \log \delta - (1-\delta)\log (1-\delta)$ as in Step 2.
\end{lemma}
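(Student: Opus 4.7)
The plan is to carry out a standard Fano-type decomposition. Writing $I(X;Y) = H(X) - H(X|Y)$, it suffices to prove the upper bound
\[
H(X|Y) < N H(\delta) + \delta N \log|B|.
\]
Introduce the error-indicator process $Z_n = \mathbf{1}[X_n \neq Y_n]$ and set $Z = (Z_0, \dots, Z_{N-1})$. Since $Z$ is a deterministic function of $(X,Y)$, one has $H(X,Z|Y) = H(X|Y)$, while the chain rule gives $H(X,Z|Y) = H(Z|Y) + H(X|Y,Z)$. So the problem splits into estimating $H(Z|Y)$ and $H(X|Y,Z)$ separately.

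For the first piece, use $H(Z|Y) \leq H(Z) \leq \sum_{n=0}^{N-1} H(Z_n)$ by subadditivity. Setting $p_n = \mathbb{P}(X_n \neq Y_n)$, I have $H(Z_n) = H(p_n)$, and the hypothesis gives $\sum p_n < \delta N$, hence $\bar{p} := \frac{1}{N}\sum p_n < \delta$. Concavity of the binary entropy $H(\cdot)$ together with Jensen's inequality yields $\sum H(p_n) \leq N H(\bar p)$, and since $H$ is monotone increasing on $[0, 1/2]$ and $\bar p < \delta \leq 1/2$, we conclude
\[
H(Z|Y) \leq N H(\delta).
\]

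For the second piece, observe that once $Y = y$ and $Z = z$ are given, the coordinates $X_n$ with $z_n = 0$ are forced to equal $y_n$, while each coordinate with $z_n = 1$ takes at most $|B|$ values. Hence
\[
H(X|Y=y, Z=z) \leq \bigl|\{n : z_n = 1\}\bigr| \cdot \log|B|,
\]
and averaging over $(Y,Z)$ gives $H(X|Y,Z) \leq \log|B| \cdot \mathbb{E}\bigl(\#\{n : X_n \neq Y_n\}\bigr) < \delta N \log|B|$, where the strict inequality comes directly from the hypothesis. Combining the two estimates yields $H(X|Y) < N H(\delta) + \delta N \log|B|$, and the desired bound on $I(X;Y)$ follows.

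There is no serious obstacle here; the only mild care needed is making sure that strict inequality propagates through the chain (which it does, through the second piece, regardless of whether concavity of $H$ is tight), and verifying that the concavity/monotonicity step is valid, which uses only $\delta < 1/2$ as given.
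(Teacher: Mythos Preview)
Your proof is correct and essentially identical to the paper's: both introduce the error-indicator $Z$, use the chain rule to write $H(X|Y) = H(Z|Y) + H(X|Y,Z)$, bound the first term by subadditivity and concavity of binary entropy, and bound the second by counting the at most $|B|^{|\{n:z_n=1\}|}$ possible values of $X$ given $(Y,Z)$. The only cosmetic difference is that the paper extracts the strict inequality from the first piece (via strict monotonicity of $H$ on $[0,1/2]$, since $\bar p<\delta$) while you extract it from the second piece; both are valid.
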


\begin{proof}
The proof is close to \cite[Lemma 17]{Lindenstrauss--Tsukamoto rate distortion}.
Let $Z_n = 1_{\{X_n\neq Y_n\}}$ and $Z=\{0\leq n <N|\, X_n\neq Y_n\}$.
We can identify $Z$ with $(Z_0,\dots,Z_{N-1})$ and hence 
\begin{equation*}
   \begin{split}
    H(Z) &\leq H(Z_0)+\dots+H(Z_{N-1}) \\
           &= H\left(\mathbb{E} Z_0 \right) +\dots+ H\left(\mathbb{E} Z_{N-1}\right)  \\
           &\leq N H\left(\frac{1}{N} \sum_{n=0}^{N-1}\mathbb{E} Z_n\right)   \quad (\text{by concavity of $H(\cdot)$})\\
           &< N H(\delta).
   \end{split}
\end{equation*}   
So $H(Z) < NH(\delta)$. We decompose $H(X,Z|Y)$ in two ways:
\[ H(X,Z|Y) = H(X|Y) + H(Z|X,Y) = H(Z|Y) + H(X|Y,Z). \]
$H(Z|X,Y) = 0$ because $Z$ is determined by $X$ and $Y$.
Hence 
\[  H(X|Y) = H(Z|Y) + H(X|Y,Z) < N H(\delta) + H(X|Y,Z). \]
We estimate 
\[  H(X|Y,Z) = \sum_{E\subset \{0,1,\dots,N-1\}}  \mathbb{P}(Z=E) H(X|Y,Z=E). \]
Given $Y$ and the condition $Z=E$, the possibilities of $X$ is at most $|B|^{|E|}$.
Therefore $H(X|Y,Z=E) \leq |E| \log |B|$ and 
\begin{equation*}
   \begin{split}
       H(X|Y,Z) &\leq 
       \sum_{E\subset \{0,1,\dots,N-1\}}  |E|\cdot \mathbb{P}(Z=E) \log |B| \\
       & = \mathbb{E} |Z| \cdot \log |B| \\
       & \leq \delta N \log |B|.
   \end{split}
\end{equation*}       
As a conclusion, $H(X|Y) < N H(\delta) + \delta N \log |B|$ and
$I(X;Y) = H(X)-H(X|Y) > H(X) - N H(\delta) - \delta N \log |B|$.
\end{proof}

Let $X$ be a random variable taking values in $\mathcal{X}$ with $\mathrm{Law}(X) = \mu$ as in Step 3.
Let $0<\varepsilon<\delta<1/2$ and $N>0$.
Let $Y=(Y_0,\dots,Y_{N-1})$ be a random variable taking values in $\mathcal{X}^N$ and satisfying 
\[   \mathbb{E}\left(\frac{1}{N}\sum_{n=0}^{N-1} d(\sigma_1^n X, Y_n)\right) < \varepsilon. \]
We estimate $I(X;Y)$ from below.
Take $M\geq 0$ satisfying $\delta\alpha^{-M-1} < \varepsilon \leq \delta \alpha^{-M}$.
For $0\leq n <N$, we set 
\[  X'_n = \pi_{\{n\}\times [-M,M]}(X)  = (X_{n,m})_{-M\leq m\leq M}, \quad 
    Y'_n = \pi_{\{0\}\times [-M,M]}(Y_n) =   \left((Y_n)_{0,m}\right)_{-M\leq m\leq M}. \]
If $X'_n\neq Y'_n$ for some $n$ then $d(\sigma_1^n X, Y_n) \geq \alpha^{-M}$.
So $\mathbb{E}d(\sigma_1^n X, Y_n) \geq \alpha^{-M} \mathbb{P}(X'_n\neq Y'_n)$ and hence 
\begin{equation*}
    \begin{split}
       \mathbb{E}\left(\text{the number of $0\leq n < N$ with $X'_n\neq Y'_n$}\right)& = \sum_{n=0}^{N-1} \mathbb{P}(X'_n\neq Y'_n) \\
       &\leq \alpha^M \mathbb{E}\left(\sum_{n=0}^{N-1}d(\sigma_1^n X, Y_n)\right) \\
       &<  \alpha^M \varepsilon  N  \leq \delta N.
    \end{split}
\end{equation*}    
Apply Lemma \ref{lemma: if X and Y coincide at many sites then I(X;Y) is large} to $X'_n$ and $Y'_n$ with 
$B=A^{2M+1}$:
\begin{equation*}
   \begin{split}
     I(X'_0,\dots,X'_{N-1}; Y'_0,\dots, Y'_{N-1}) > & H(X'_0,\dots,X'_{N-1}) \\
     &-N H(\delta) -\delta N (2M+1) \log |A|. 
    \end{split}
\end{equation*}     
By the data-processing inequality (Lemma \ref{lemma: data-processing inequality}),
\[   I(X;Y) \geq   I(X'_0,\dots,X'_{N-1}; Y'_0,\dots, Y'_{N-1}). \]
Therefore 
\[  \frac{I(X;Y)}{N} \geq \frac{H\left\{(X_u)_{u\in [0,N)\times [-M,M]}\right\}}{N} -H(\delta) -\delta(2M+1)\log|A|. \]
This holds for any $N>0$. So 
\begin{equation*}
    \begin{split}
      R(d,\mu,\varepsilon) & \geq \inf_{N>0} \frac{H\left\{(X_u)_{u\in [0,N)\times [-M,M]}\right\}}{N}  -H(\delta) -\delta(2M+1)\log|A| \\
      &  = \lim_{N\to \infty} \frac{H\left\{(X_u)_{u\in [0,N)\times [-M,M]}\right\}}{N} -H(\delta) -\delta(2M+1)\log|A|.
    \end{split}
\end{equation*}    
We divide this by $\log(1/\varepsilon)$ and take the limit $\varepsilon \to 0$.
Noting $\log(1/\varepsilon) < \log(1/\delta) + (M+1) \log\alpha$ (here $\delta$ has been fixed), we get
\[  \underline{\rdim}(\mathcal{X},\sigma_1,d,\mu) \geq \frac{2h_\mu(\mathcal{X},\sigma_1,\sigma_2)}{\log\alpha}
     -\frac{2\delta \log|A|}{\log\alpha}. \]
Here we have used 
\[ h_\mu(\mathcal{X},\sigma_1,\sigma_2) = \lim_{N,M\to \infty} \frac{H\left\{(X_u)_{u\in [0,N)\times [-M,M]}\right\}}{N(2M+1)}. \]
Take the limit $\delta\to 0$.
We get $\underline{\rdim}(\mathcal{X},\sigma_1,d,\mu) \geq 2 h_{\mu}(\mathcal{X},\sigma_1,\sigma_2)/\log\alpha$.

\vspace{0.5cm}

\address{ Mao Shinoda \endgraf
Department of Human Coexistence, Graduate School of Human and Environmental Studies, Kyoto University, Yoshida-Nihonmaths-cho, Sakyo-ku, Kyoto, 606-8501, Japan}

\textit{Email}: \texttt{shinoda-mao@keio.jp}

\vspace{0.5cm}

\address{ Masaki Tsukamoto \endgraf
Department of Mathematics, Kyushu University, Moto-oka 744, Nishi-ku, Fukuoka 819-0395, Japan}

\textit{E-mail}: \texttt{masaki.tsukamoto@gmail.com}


\begin{thebibliography}{99}






\bibitem[CT06]{Cover--Thomas}
T.~M.~Cover, J.~A.~Thomas,
Elements of information theory, second edition, 
Wiley, New York, 2006.







\bibitem[ECG94]{ECG}
M. Effros, P. A. Chou, G. M. Gray, 
Variable-rate source coding theorems for stationary nonergodic sources,
IEEE Trans. Inf. Theory vol. 40,  pp. 1920-1925, 1994.



\bibitem[EW11]{Einsiedler--Ward}
M.~Einsiedler, T.~Ward,
Ergodic theory with a view towards number theory, 
Graduate Texts in Mathematics \textbf{259}, Springer, London.




\bibitem[Fur67]{Furstenberg}
H.~Furstenberg,
Disjointness in ergodic theory, minimal sets, and a problem in Diophantine approximation,
Math. Systems Theory \textbf{1} (1967) 1-49.





\bibitem[Gra90]{Gray}
R.M. Gray, 
Entropy and information theory,
New York, Springer-Verlag, 1990.





\bibitem[Gro99]{Gromov}
M. Gromov, 
Topological invariants of dynamical systems and spaces of holomorphic maps: I,
Math. Phys. Anal. Geom. vol. 2  pp. 323-415, 1999.
 
 


\bibitem[Gut15]{Gutman Jaworski theorem}
Y. Gutman, Mean dimension and Jaworski-type theorems, 
Proceedings of the London Mathematical Society \textbf{111(4)} (2015) 831-850.





 




\bibitem[GLT16]{Gutman--Lindenstrauss--Tsukamoto}
Y. Gutman, E. Lindenstrauss, M. Tsukamoto,
Mean dimension of $\mathbb{Z}^k$-actions,
Geom. Funct. Anal.
\textbf{26} Issue 3 (2016) 778-817.





\bibitem[GQT]{Gutman--Qiao--Tsukamoto}
Y.~ Gutman, Y.~Qiao, M. Tsukamoto,
Application of signal analysis to the embedding problem of $\mathbb{Z}^k$-actions,
arXiv:1709.00125, to appear in Geom. Funct. Anal.





\bibitem[GT]{Gutman--Tsukamoto minimal}
Y. Gutman , M. Tsukamoto, 
Embedding minimal dynamical systems into Hilbert cubes,
preprint, arXiv:1511.01802.





 
 
 
\bibitem[KD94]{Kawabata--Dembo}
T.~Kawabata and A.~Dembo,
The rate distortion dimension of sets and measures,
IEEE Trans. Inf. Theory, vol. 40, no. 5, pp. 1564-1572, Sep. 1994.




\bibitem[LDN79]{LDN}
A. Leon-Garcia, L. D. Davisson, D. L. Neuhoff, 
New results on coding of stationary nonergodic sources,
IEEE Trans. Inform. Theory, vol. 25,  pp. 137-144, 1979.







\bibitem[LL18]{Li--Liang}
H. Li, B. Liang, 
Mean dimension, mean rank and von Neumann--L\"{u}ck rank,
J. Reine Angew. Math. \textbf{739} (2018) 207-240.





 
 
\bibitem[Lin99]{Lindenstrauss}
E.~Lindenstrauss,
Mean dimension, small entropy factors and an embedding theorem,
Inst. Hautes \'{E}tudes Sci. Publ. Math. vol. 89   pp. 227-262, 1999.



\bibitem[Lin01]{Lindenstrauss pointwise}
E.~Lindenstrauss,
Pointwise theorems for amenable groups,
Invent. math. \textbf{146} (2001) 259-296.





\bibitem[LT18]{Lindenstrauss--Tsukamoto rate distortion}
E.~Lindenstrauss, M.~Tsukamoto, 
From rate distortion theory to metric mean dimension: variational principle,
IEEE Trans. Inf. Theory, vol. 64, No. 5, pp. 3590-3609, May, 2018.





 
\bibitem[LT19]{Lindenstrauss--Tsukamoto double VP}
E.~Lindenstrauss, M.~Tsukamoto,
Double variational principle for mean dimension, 
Geom. Funct. Anal. \textbf{29} (2019) 1048-1109.
 
 
 
 

\bibitem[LW00]{Lindenstrauss--Weiss}
E.~Lindenstrauss, B.~Weiss,
Mean topological dimension,
Israel J. Math. vol. 115  pp. 1-24, 2000.

 


\bibitem[Ma79]{Mane}
R.~Ma\~{n}\'{e},
Expansive homeomorphisms and topological dimension,
Trans. Amer. Math. Soc. \textbf{252} (1979) 313-319.





\bibitem[Mat95]{Mattila}
P.~Mattila, 
Geometry of sets and measures in Euclidean spaces,
Fractals and rectifiability, Cambridge Studies in Advanced Mathematics, 44, Cambridge University Press, 
Cambridge, 1995.



\bibitem[MT19]{Meyerovitch--Tsukamoto}
T. Meyerovitch, M. Tsukamoto,
Expansive multiparameter actions and mean dimension,
Trans. Amer. Math. Soc. \textbf{371} (2019), 7275-7299



\bibitem[OW83]{Ornstein--Weiss}
D.~S.~Ornstein, B.~Weiss, 
The Shannon--McMillan--Breiman theorem for a class of amenable groups,
Israel J. Math. \textbf{44} (1983) 53-60.



  
\bibitem[R\'{e}n59]{Renyi}
A.~R\'{e}nyi, 
On the dimension and entropy of probability distributions,
Acta Math. Sci. Hung. vol. 10, pp. 193-215, 1959.


\bibitem[Rud90]{Rudolph}
D.~J.~Rudolph,
Fundamentals of measurable dynamics, Clarendon Press, Oxford, 1990.




\bibitem[Sh48]{Shannon}
C.~E.~Shannon,
A mathematical theory of communication,
Bell Syst. Tech. J. \textbf{27} (1948) 379-423, 623-656.




\bibitem[Sh59]{Shannon59}
C.~E.~Shannon, 
Coding theorems for a discrete source with a fidelity criterion,
IRE Nat. Conv. Rec., Pt. 4, pp. 142-163, 1959.





\bibitem[Tsu18]{Tsukamoto Brody}
M. Tsukamoto, 
Mean dimension of the dynamical system of Brody curves,
Invent. math. \textbf{211} (2018) 935-968.









\bibitem[Youn82]{Young}
L.-S. Young, Dimension, entropy and Lyapunov exponents, Ergodic Theory Dynam. Systems \textbf{2} (1982) 
109-124.






\end{thebibliography}
\end{document}